\documentclass{amsart}
\usepackage{mathtools}
\usepackage{amsfonts,amssymb,amsthm,amsmath}
\usepackage{mathrsfs}
\usepackage{enumerate}
\usepackage{pgf}
\usepackage{url}
\usepackage[all]{xy}
\SelectTips{cm}{}
\usepackage{graphicx}
\usepackage{hyperref}
\usepackage{lineno,hyperref}
\usepackage{cleveref}
\usepackage{float}
\usepackage{comment}
\usepackage{tikz}
\usetikzlibrary{shapes.geometric}
\allowdisplaybreaks

\newcommand{\Z}{\mathbb{Z}}

\newcommand{\N}{\mathbb{N}}

\newcommand{\dist}{\mathrm{dist}}
\newcommand{\diam}{\mathrm{diam}}

\newtheorem{theorem}{Theorem}
\crefname{theorem}{Theorem}{Theorems}

\crefname{proposition}{Proposition}{Propositions}
\newtheorem{lemma}[theorem]{Lemma}
\crefname{lemma}{Lemma}{Lemmas}

\crefname{corollary}{Corollary}{Corollaries}
\newtheorem*{conjecture}{Conjecture}
\crefname{conjecture}{Conjecture}{Conjectures}
\theoremstyle{definition}
\newtheorem*{remark}{Remark}
\crefname{remark}{Remark}{Remarks}
\newtheorem{definition}{Definition}
\crefname{definition}{Definition}{Definitions}
\newtheorem{example}{Example}
\crefname{example}{Example}{Examples}
\crefname{table}{Table}{Tables}
\crefname{section}{Section}{Sections}
\crefname{equation}{Equation}{Equations}
\crefname{figure}{Figure}{Figures}
\crefname{appendix}{Appendix}{Appendices}

\title[Small Gaps Between Three Almost Primes]{Small Gaps Between Three Almost Primes and Almost Prime Powers}

\author[Daniel Goldston, Apoorva Panidapu, and Jordan Schettler]{Daniel A. Goldston, Apoorva Panidapu, and Jordan Schettler}
\date{}

\begin{document}
\maketitle
\begin{abstract}
    A positive integer is called an $E_j$-number if it is the product of $j$ distinct primes. We prove that there are infinitely many triples of $E_2$-numbers within a gap size of $32$ and infinitely many triples of $E_3$-numbers within a gap size of $15$. Assuming the Elliot-Halberstam conjecture for primes and $E_2$-numbers, we can improve these gaps to $12$ and $5$, respectively. We can obtain even smaller gaps for almost primes, almost prime powers, or integers having the same exponent pattern in the their prime factorizations. In particular, if $d(x)$ denotes the number of divisors of $x$, we prove that there are integers $a,b$ with $1\leq a < b \leq 9$ such that $d(x)=d(x+a)=d(x+b) = 192$ for infinitely many $x$. Assuming Elliot-Halberstam, we prove that there are integers $a,b$ with $1\leq a < b \leq 4$ such that $d(x)=d(x+a)=d(x+b) = 24$ for infinitely many $x$.
\end{abstract}
\section{Introduction}

For our purposes, an \emph{almost prime} or \emph{almost prime power} will refer to a positive 
integer with some fixed small number of prime factors 
counted with or without multiplicity, respectively. 
Small gaps between primes and almost primes
became a popular subject of 
research following the
results of the GPY sieve \cite{Gold0} and Yitang Zhang's subsequent proof of bounded gaps between primes \cite{Zhan}.
For a positive integer $x$, let $\Omega(x)$ denote the number of prime 
factors of $x$ counted with multiplicity, and let $\omega(x)$ 
denote the number of prime factors of $x$ counted without 
multiplicity, i.e., $\omega(x)$ is the number of distinct 
primes dividing $x$. We say that $x$ is a 
\emph{$j$-almost prime} when $\Omega(x)=j$, and we say that
$x$ is a \emph{$j$-almost prime power} when $\omega(x)=j$.
We call $x$ an \emph{$E_j$-number} when 
$\Omega(x)=\omega(x)=j$, i.e., $x$ is the product of $j$ distinct primes. Let 
$S_n^{(j)}$, $s_n^{(j)}$, 
$q_n^{(j)}$ denote the $n$th $j$-almost
prime, $n$th $j$-almost prime power, and $n$th 
$E_j$-number, respectively, where the sequences are 
ordered by inequality.
For example, $p_n \mathrel{\mathop:}= q^{(1)}_n = S^{(1)}_n$ is the $n$th 
prime number, $s^{(1)}_n$ is the $n$th prime power, and

\[S_1^{(2)} = 4, ~ S_2^{(2)} = 6, ~ S_3^{(2)} = 9, ~ S_4^{(2)} = 10, ~\ldots, \]

\[s_1^{(2)} = 6, ~ s_2^{(2)} = 10, ~ s_3^{(2)} = 12, ~  s_4^{(2)} = 14, ~ \ldots, \]

\[q_1^{(2)} = 6, ~ q_2^{(2)} = 10, ~ q_3^{(2)} = 14, ~ q_4^{(2)} = 15, ~ \ldots . \]

It is known (see \cite{Mayn} for $j=1$ and \cite{Thor} for $j>1$) that for any positive integers $j$ and $\nu$, we have
$$\liminf_{n\rightarrow \infty} (q_{n+\nu}^{(j)} - q_n^{(j)}) < \infty.$$
Thus there are bounded gaps containing infinitely often 
$\nu+1$ members of 
any of the sequences $S_n^{(j)}$, $s_n^{(j)}$, 
$q_n^{(j)}$
since any $E_j$-number is both a $j$-almost prime 
and a $j$-almost prime power. When $\nu = 1$, we 
have the following results unconditionally:
$$\liminf_{n\rightarrow \infty} (p_{n+1} - p_n) \leq 246, \mbox{\hspace{0.1 in} \cite{Poly}}$$
$$\liminf_{n\rightarrow \infty} (q_{n+1}^{(2)} - q_n^{(2)}) \leq 6, \mbox{\hspace{0.1 in} \cite{Gold1}}$$
$$\liminf_{n\rightarrow \infty} (S_{n+1}^{(3)} - S_n^{(3)})\leq \liminf_{n\rightarrow \infty} (q_{n+1}^{(3)} - q_n^{(3)}) \leq 2, \mbox{\hspace{0.1 in} \cite{Gold2}}$$
$$\liminf_{n\rightarrow \infty} (s_{n+1}^{(3)} - s_n^{(3)}) = 1, \mbox{\hspace{0.1 in} \cite{Gold2}}$$
and for $j\geq 4$, we get
$$\liminf_{n\rightarrow \infty} (s_{n+1}^{(j)} - s_n^{(j)})=\liminf_{n\rightarrow \infty} =(S_{n+1}^{(j)} - S_n^{(j)}) = 1. \mbox{\hspace{0.1 in} \cite{Gold2}}$$
We can can also get sharper bounds by assuming some widely 
believed conjectures. For example, if there are infinitely 
many Mersenne primes $2^p-1$, then 
$$\liminf_{n\rightarrow \infty} (s_{n+1}^{(1)} - s_n^{(1)}) = 1.$$
If we assume the Elliott-Halberstam conjecture for primes (see \cref{ggpy}), then we get a much smaller gap for primes:
$$\liminf_{n\rightarrow \infty} (p_{n+1} - p_n) \leq 12. \mbox{\hspace{0.1 in} \cite{Mayn}}$$
In fact, a generalized Elliott-Halberstam conjecture can 
reduce this gap to $6$, while the still unproven twin prime 
conjecture asserts that $\liminf_{n\rightarrow \infty} (p_{n+1} - p_n) = 2$. The twin prime conjecture follows from a special case of the prime $k$-tuple conjecture, the first Hardy-Littlewood conjecture, which we will describe in the next section.

In this paper, we are interested in the case $\nu=2$ and $j\geq 2$. This 
means we want to find small gaps which contain three $j$-almost primes or $j$-almost prime powers infinitely often. For the primes themselves (i.e., when $j=1$), the $k$-tuple conjecture for $k=\nu+1=3$ implies that
$$\liminf_{n\rightarrow \infty} (p_{n+2} - p_n) = 6.$$
Unconditionally, we know
$$\liminf_{n\rightarrow \infty} (p_{n+2} - p_n) \leq 398130, \mbox{\hspace{0.1 in} \cite{Poly}}$$
and assuming the Elliott-Halberstam conjecture for primes we can improve this to
$$\liminf_{n\rightarrow \infty} (p_{n+2} - p_n) \leq 270. \mbox{\hspace{0.1 in} \cite{Poly}}$$
For three $j$-almost primes or $j$-almost prime powers with $j\geq 2$, we can get much smaller gaps. For instance, assuming the Elliott-Halberstam conjecture for primes and $E_2$-numbers, Sono proved
\begin{equation}\label{Sonoresult}
\liminf_{n\rightarrow \infty} (q_{n+2}^{(2)}-q_n^{(2)}) \leq 12 \mbox{\hspace{0.1 in} \cite{Sono}}
\end{equation}
and
$$\liminf_{n\rightarrow \infty} (r_{n+2}^{(2)}-r_n^{(2)}) \leq 6 \mbox{\hspace{0.1 in} \cite{Sono}}$$
where $r_n^{(2)}$ denotes the $n$th $\mathcal{P}_2$-number (i.e., a prime or $E_2$). \\

Sono used a multi-dimensional, Maynard-Tao sieve to obtain his results. Similar techniques have been applied to $E_j$-numbers for $j > 2$ as in \cite{Liu}. Here we use the GGPY sieve to give an alternate proof of Sono's inequality in \cref{Sonoresult} and also prove an unconditional version which has not been previously derived. Let $\mathrm{EH}(\mathcal{P}, \mathcal{E}_2)$ denote the Elliot-Halberstam conjecture for primes and $E_2$-numbers.

\begin{theorem}\label{E2}
We have
$$\liminf_{n\rightarrow \infty} (q_{n+2}^{(2)}-q_n^{(2)}) \leq 32.$$
If we assume $\mathrm{EH}(\mathcal{P}, \mathcal{E}_2)$, then
$$\liminf_{n\rightarrow \infty} (q_{n+2}^{(2)}-q_n^{(2)}) \leq 12.$$
\end{theorem}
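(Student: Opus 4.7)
The plan is to apply the GGPY sieve machinery that yielded $\liminf(q_{n+1}^{(2)} - q_n^{(2)}) \leq 6$ in \cite{Gold1}, but now tuned to detect three $E_2$-numbers rather than two. Fix an admissible tuple $\mathcal{H} = \{h_1 < h_2 < \cdots < h_k\}$ of non-negative integers with diameter $H = h_k - h_1$. To each $n \in (N, 2N]$ I would attach a non-negative GGPY-style weight
$$w_n = \left( \frac{1}{(k+\ell)!} \sum_{\substack{d \mid P_\mathcal{H}(n) \\ d \leq R}} \mu(d) \log^{k+\ell}(R/d) \right)^2,$$
where $P_\mathcal{H}(n) = \prod_{i=1}^k (n+h_i)$, $\ell$ is a positive integer parameter, and $R = N^{\theta/2 - \epsilon}$ for a level of distribution $\theta$ for $E_2$-numbers. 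Unconditionally, $\theta = 1/2$ is available via a Bombieri--Vinogradov-style result for $E_2$'s; under $\mathrm{EH}(\mathcal{P}, \mathcal{E}_2)$, we may take $\theta$ arbitrarily close to $1$. The aim is to establish positivity of
$$\mathcal{D}(N) = \sum_{N < n \leq 2N} \left( \sum_{i=1}^k \mathbf{1}_{E_2}(n+h_i) - 2 \right) w_n$$
for all large $N$, which, since $w_n \geq 0$, forces some $n \in (N, 2N]$ to have at least three shifts $n + h_i$ that are $E_2$-numbers, giving $q_{m+2}^{(2)} - q_m^{(2)} \leq H$ infinitely often.

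Next I would carry out the GGPY asymptotic analysis of the two sums $\mathcal{M}_1(N) = \sum_n w_n$ and $\mathcal{M}_2(N) = \sum_n \sum_{i=1}^k \mathbf{1}_{E_2}(n + h_i) w_n$. Expanding the square and applying the level-of-distribution input (Bombieri--Vinogradov for $\mathcal{M}_1$, an $E_2$ analog for $\mathcal{M}_2$) yields
$$\mathcal{M}_1(N) \sim c_1(k, \ell) \cdot \mathfrak{S}(\mathcal{H}) \cdot N (\log R)^{k+2\ell}, \qquad \mathcal{M}_2(N) \sim c_2(k, \ell, \theta) \cdot \mathfrak{S}(\mathcal{H}) \cdot N (\log R)^{k+2\ell}\cdot\tfrac{\log\log N}{\log N}$$
(up to a density factor matching the natural count of $E_2$'s), with $c_1, c_2$ explicit combinatorial constants and $\mathfrak{S}(\mathcal{H})$ the singular series of $\mathcal{H}$. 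After rescaling by the density of $E_2$'s in $[N, 2N]$, the problem reduces to showing that an explicit ratio $\rho(k, \ell, \theta)$ exceeds $2$, which one then tries to verify by optimizing over $\ell$.

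The remaining work splits into two parts. First, given the smallest $k$ for which $\rho(k, \ell, \theta) > 2$ is attainable, I would exhibit an admissible $k$-tuple of diameter $\leq 32$ unconditionally and $\leq 12$ under $\mathrm{EH}(\mathcal{P}, \mathcal{E}_2)$; this is a standard construction using sieved residue systems modulo small primes. Second, verify the numerical inequality $\rho(k, \ell, \theta) > 2$ at the chosen parameters. The principal technical obstacle is the asymptotic evaluation of $\mathcal{M}_2(N)$: detecting $\mathbf{1}_{E_2}(n + h_i)$ requires unfolding $E_2 = pq$ as a convolution of prime indicators, and the resulting prime-pair sums demand a Bombieri--Vinogradov-type distributional bound uniform over moduli up to roughly $R^2$. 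This input is known unconditionally at level $\theta = 1/2$; surpassing $\theta = 1/2$ (as needed to improve beyond the gap of $32$) is precisely what the $\mathrm{EH}(\mathcal{P}, \mathcal{E}_2)$ hypothesis provides, yielding the improved bound of $12$.
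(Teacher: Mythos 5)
Your overall route is the same as the paper's: the paper proves Theorem~1 by combining a GGPY-sieve statement (its Theorem~6, \cref{thm:nu=2}: every admissible $10$-tuple, or $5$-tuple under $\mathrm{EH}(\mathcal{P},\mathcal{E}_2)$, contains three forms that are simultaneously $E_2$ infinitely often) with the narrowest admissible $10$-tuple $m,m+2,\dots,m+32$ and $5$-tuple $m,m+2,m+6,m+8,m+12$. So the skeleton of your argument is right, but the decisive step is missing: you never establish that the sieve inequality actually holds at $k=10$ (unconditionally) and $k=5$ (under EH). The bounds $32$ and $12$ are exactly the minimal diameters of admissible $10$- and $5$-tuples, so your conclusion stands or falls with those specific values of $k$; saying ``given the smallest $k$ for which $\rho(k,\ell,\theta)>2$ is attainable, exhibit a tuple of diameter $\leq 32$'' is circular unless you show that smallest $k$ is $10$ (resp.\ $5$).

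There is also a concrete reason to doubt that your particular weight succeeds at those $k$. Your $w_n$ with $\mu(d)\log^{k+\ell}(R/d)$ is the one-parameter GPY family, corresponding to the monomial choice $P(x)=x^{\ell}$ in the GGPY framework, and you apply it to the unsifted indicator $\mathbf{1}_{E_2}$ (note your $\mathcal{M}_2$ then carries the $\log\log N/\log N$ density of unrestricted $E_2$'s). The paper instead works with sifted $E_2$-numbers $p_1p_2$, $N^{\eta}<p_1\leq N^{1/2}<p_2$ with $\eta=1/340$, and optimizes over genuine polynomials, needing $P(x)=\tfrac{3}{20}+\tfrac{3}{5}x+10x^2$ at $k=10$, $B=4$; even then the positivity margin is $J\approx 3.6\times 10^{-5}$. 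With a margin that thin, restricting to $P(x)=x^{\ell}$ and dropping the sifting parameter $\eta$ is very likely to destroy positivity at $k=10$, forcing a larger $k$ and hence a diameter worse than $32$; the conditional case $k=5$ is similarly delicate ($B=201/100$, $P(x)=\tfrac34+6x+10x^2$). To make your proposal into a proof you would need either to import the paper's Theorem~6 (or the corresponding computation from GGPY-type arguments with general $P$ and the sifted $\beta$), or to carry out and exhibit the numerical verification for your own weight at $k=10$ and $k=5$ --- which, as written, you have not done and which probably fails for the family you chose.
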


We use combinatorial methods which complement the sieve results to prove our main theorems.

\begin{theorem}\label{E3}
We have
\[\liminf_{n\rightarrow \infty} (q^{(3)}_{n+2} - q^{(3)}_n)\leq 15. \]
If we assume $\mathrm{EH}(\mathcal{P}, \mathcal{E}_2)$, then
\[\liminf_{n\rightarrow \infty} (q^{(3)}_{n+2} - q^{(3)}_n)\leq 5. \]
\end{theorem}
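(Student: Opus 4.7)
The plan is to deduce Theorem \ref{E3} from Theorem \ref{E2} by combinatorial analysis: once the local factorization pattern around an $E_2$-triple is fixed by pigeonhole, enough nearby integers are forced to have few prime factors that we can extract three $E_3$-numbers in a sub-window shorter than the original $E_2$-window.

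First I would invoke Theorem \ref{E2} to obtain infinitely many windows $[N, N+W]$ containing three $E_2$-numbers $m_1 < m_2 < m_3$, where $W = 32$ unconditionally and $W = 12$ under $\mathrm{EH}(\mathcal{P}, \mathcal{E}_2)$. A standard refinement of the underlying GGPY sieve permits restricting the support to integers coprime to any fixed primorial $Q$, so we may arrange each $m_i$ to be coprime to every prime up to some chosen bound. Since the gap vector $(m_2 - m_1,\, m_3 - m_1) \in \{1,\dots,W\}^2$ and the residue class $N \bmod Q$ each take only finitely many values, pigeonhole yields an infinite subsequence along which both are constant. This pins down the positions of $m_1, m_2, m_3$ within the window and the complete set of small prime divisors of every $N+j$ for $0 \leq j \leq W$.

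The combinatorial step is then to find three $E_3$-numbers within a sub-window of length $\leq 15$ (resp. $\leq 5$). With the residue pattern fixed, whether $N+j$ is an $E_3$-number reduces to whether the cofactor obtained by dividing out the known small prime divisors is a single prime of the right size. For favorable residue patterns, several offsets $j$ take the form $N+j = pq\cdot r_j$ with $r_j$ constrained to be prime precisely when $N+j$ is $E_3$, and the Maynard--Tao/GGPY machinery underlying Theorem \ref{E2} can be tuned to force three such $r_j$ to be simultaneously prime. The main obstacle is the combinatorial enumeration itself: one must identify a residue pattern admitting three $E_3$-candidate positions inside a sub-window of the desired length, verify admissibility of the associated $3$-tuple, and apply upper bound sieves to rule out coincidental extra prime factors (e.g., $N+j$ being a prime power or sharing a repeated prime). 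This finite but delicate case analysis, very likely aided by a computer search over admissible tuples and residue patterns, is what yields the specific numeric bounds $15$ and $5$ as the smallest sub-window sizes realizable from the $E_2$-triple bounds $W = 32$ and $W = 12$.
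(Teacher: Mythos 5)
Your reduction of \cref{E3} to \cref{E2} has a genuine gap, and in fact the numerology you hope for (sub-windows of length $15$ and $5$ inside windows of length $32$ and $12$) is not how these bounds arise. Two specific problems. First, the three $E_2$-numbers produced by \cref{E2} are of no direct use for $E_3$-gaps: an $E_2$-number is not an $E_3$-number, and multiplying the triple $m_1<m_2<m_3$ by any fixed integer $>1$ to add a prime factor only dilates the gaps, so nothing inside the original window shrinks. Second, your fallback --- fixing the residue pattern mod a primorial by pigeonhole and then asking the sieve to make three cofactors $r_j$ of \emph{other} integers $N+j$ in the window simultaneously prime --- is not a ``tuning'' of the machinery behind \cref{E2}. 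That machinery (\cref{thm:nu=2}) only guarantees that \emph{some unspecified} $3$ forms out of an admissible $10$-tuple (or $5$-tuple under $\mathrm{EH}(\mathcal{P},\mathcal{E}_2)$) take $E_2$-values simultaneously; demanding in addition that three prescribed cofactors be prime is essentially a three-simultaneous-primes statement, which is far beyond what a $10$-tuple gives (unconditionally three primes currently needs gaps of order $398130$, as cited in the introduction). So the step ``the GGPY machinery can be tuned to force three such $r_j$ to be simultaneously prime'' would fail, and the bounds $15$ and $5$ cannot be extracted this way.

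The paper's route is different in kind: it does not pass through \cref{E2} at all, but applies \cref{thm:nu=2} to carefully constructed tuples of linear forms with \emph{distinct leading coefficients}, together with \cref{pattern,adjointhm}. For example, under $\mathrm{EH}(\mathcal{P},\mathcal{E}_2)$ one uses the admissible $5$-tuple $70m+1$, $105m+2$, $42m+1$, $30m+1$, $105m+4$ (obtained from $2m+1,\dots,3m+4$ by the adjoining transformation $T_{35,0}$), whose relation coefficients $3,2,5,7,2$ are prime; whichever three forms take $E_2$-values with large prime factors, multiplying by these primes yields three $E_3$-numbers of the shape $210m+c_ib_i$ lying within a gap of $5$. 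Unconditionally one starts from the monic $10$-tuple $m+4,\dots,m+19$ and applies $T_{19\#,0}$, giving prime coefficients $2,3,5,\dots,19$ and relation values at most $15$. The crucial point, which your proposal misses, is that allowing the small prime factors $2,3,5,\dots,19$ in the $E_3$-numbers circumvents the admissibility constraint on shift patterns (cf.\ \cref{diametertheorem}) and is exactly what lets the $E_3$-gaps ($15$, $5$) be much smaller than the $E_2$-gaps ($32$, $12$); the former are not refinements of the latter.
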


\begin{theorem}\label{S3}
We have
\[\liminf_{n\rightarrow \infty} (s_{n+2}^{(4)} - s_n^{(4)}), \, \liminf_{n\rightarrow \infty} (S_{n+2}^{(5)} - S_n^{(5)})\leq 9.\]
If we assume $\mathrm{EH}(\mathcal{P}, \mathcal{E}_2)$, then
\[\liminf_{n\rightarrow \infty} (s_{n+2}^{(3)} - s_n^{(3)}),\, \liminf_{n\rightarrow \infty} (S_{n+2}^{(4)} - S_n^{(4)})\leq 4. \]
\end{theorem}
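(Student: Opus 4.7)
The plan is to deduce Theorem \ref{S3} from Theorem \ref{E2} via a combinatorial argument on prime exponent signatures. The key structural observation is that integers with signature $(2,1,1,1)$ have $\omega=4$ and $\Omega=5$, qualifying simultaneously as $4$-almost prime powers and $5$-almost primes; integers with signature $(2,1,1)$ have $\omega=3$ and $\Omega=4$, qualifying simultaneously as $3$-almost prime powers and $4$-almost primes. Thus both inequalities of each part of Theorem \ref{S3} follow from producing, infinitely often, three integers $x<x+a<x+b$ sharing signature $(2,1,1,1)$ with $b\leq 9$ (unconditionally), and three integers sharing signature $(2,1,1)$ with $b\leq 4$ (under $\mathrm{EH}(\mathcal{P},\mathcal{E}_2)$).

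I would then apply Theorem \ref{E2} to obtain infinitely many triples of $E_2$-numbers $m_1<m_2<m_3$ in a window of length $W$, with $W=32$ unconditionally and $W=12$ under $\mathrm{EH}(\mathcal{P},\mathcal{E}_2)$. By running the underlying GGPY sieve in a restricted residue class modulo the primorial $\prod_{p\leq B}p$, I may further assume every prime factor of each $m_i$ exceeds a prescribed threshold $B$. The combinatorial step is to identify, for infinitely many such triples, three positions near the window $[m_1, m_1+W]$ whose integers have the target signature and whose extreme gap is $\leq 9$ (resp.\ $\leq 4$). These targets need not be the $m_i$'s themselves; natural candidates include nearby integers of the form $m_i+c$ or multiplicative decorations like $r^2 m_i$ for a small prime $r\leq B$ (giving signature $(2,1,1)$), or $r^2 s\, m_i$ with $r,s$ distinct primes $\leq B$ (giving signature $(2,1,1,1)$).

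The principal obstacle I expect is the \emph{packing}: a uniform decoration $m_i\mapsto r^2 m_i$ dilates the window by $r^2$, which is counterproductive, so a more delicate construction is required. My plan is a case analysis on the residues of $(m_1,m_2,m_3)$ modulo a small primorial $M$: for each feasible residue pattern (there are only finitely many) I would either exhibit an explicit packing — by pairing each $m_i$ with an appropriately-chosen decoration, or by pointing to a nearby integer in the window whose signature is forced by the residues of the $m_i$ — or else discard the pattern. Since Theorem \ref{E2} specializes to any fixed residue class modulo $M$ with no asymptotic loss, infinite occurrence survives from at least one favorable pattern. The technical weight of the proof lies in this combinatorial case work — both verifying that each surviving residue pattern admits a valid packing, and confirming that the conditional window $W=12$ together with the signature $(2,1,1)$ decoration suffices to realize the sharper conditional bound of $4$ rather than something larger.
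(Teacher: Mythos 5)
Your reduction has a genuine gap: you try to post-process the \emph{output} of \cref{E2} (three $E_2$-numbers $m_1<m_2<m_3$ in a window of $32$, resp.\ $12$), but neither of your proposed repair mechanisms can work at that level. First, the residue-class case analysis cannot ``force'' a signature: a congruence condition modulo a primorial can force specified small primes to divide a nearby integer to specified powers, but it says nothing about the factorization of the remaining cofactor, which could have any number of prime factors. The only thing that certifies an $E_2$-cofactor is the sieve itself, and that certification is attached to the specific linear forms being sifted, not to arbitrary integers sitting in the window. Second, the packing obstacle you flag is in fact fatal for any decoration of the $m_i$ themselves: if the $m_i$ arise from monic forms (as in the tuple used for \cref{E2}), then to have $c_3m_3-c_1m_1$ bounded independently of $m$ you are forced to take all $c_i$ equal, and then the gap is $c(m_3-m_1)\geq 4\cdot(m_3-m_1)$, which moves you further from $9$ (or $4$), not closer. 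So no choice of decorations $r^2m_i$ or $r^2s\,m_i$, and no discarding of residue patterns, closes the gap from $32$ (or $12$) down to $9$ (or $4$).

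The paper resolves this by building the multiplications into the forms \emph{before} sieving, and by invoking the stronger input \cref{thm:nu=2} (three simultaneous $E_2$-values in \emph{every} admissible $10$-tuple, resp.\ $5$-tuple, with all prime factors above any prescribed bound) rather than the corollary \cref{E2}. Concretely, one starts from $L_i=m+i$ for $i\leq k$ ($k=10$ or $5$), applies $T_{A(k),0}$ with $A(k)=[1,\dots,k]$ to obtain the admissible non-monic tuple $L_i'=(A(k)/i)m+1$, for which $iL_i'(m)=A(k)m+i$; thus whichever three forms the sieve makes simultaneously $E_2$ (with large prime factors), the scaled values $iL_i'(m)$ lie within $k-1$ of one another --- this is exactly the consistent-diagram mechanism of \cref{pattern}, and it is what defeats the dilation problem, since the scaling constants are matched to distinct leading coefficients. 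Finally, \cref{adjointhm}(c) adjoins primes $g_i>k$ to replace the coefficients $i$ by $ig_i$ with $\omega(ig_i)$, $\Omega(ig_i)$, or the exponent pattern of $ig_i$ constant in $i$, which is what makes the three nearby integers all $3$-almost prime powers, $4$-almost primes, etc. Your plan contains no substitute for this forms-level construction, and without it the deduction from \cref{E2} does not go through.
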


We will also show that \cref{S3} is optimal in the sense that smaller gap sizes cannot be obtained with current methods. See \cref{diametertheorem}.

Let $d(x)$ denote the number of divisors of $x$. Then the twin prime conjecture is equivalent to the statement $d(x)=d(x+2)=2$ for infinitely many $x$. The authors and their collaborators showed in \cite{GGPPSY} that for every integer $n>0$, there are infinitely many $x$ such that
$x$ and $x+n$ have the same fixed \emph{exponent pattern}, i.e., multiset of exponents in the prime factorization; in particular, $d(x)=d(x+n)=c$ for some fixed $c$ (depending on $n$) and infinitely many $x$. For example, both $180 = 2^2\cdot 3^2 \cdot 5^1$ and $300 = 2^2\cdot 3^1 \cdot 5^2$ have exponent pattern $\{2,2,1\}$ with $d(180)=d(300)=18$. Little is known about multiple shifts $x<x+a<x+b$. It is conjectured that $d(x)=d(x+1)=d(x+2)=4$ for infinitely many $x$, but no statement of the form $d(x)= d(x+a)=d(x+b)$ for infinitely many $x$ has been established for any particular $a$, $b$. Here we prove the following.

\begin{theorem}\label{Div}
There are integers $a, b$ with $1\leq a<b\leq 9$ such that $x$, $x+a$, and $x+b$ all have exponent pattern $\{3,2,1,1,1,1\}$ for infinitely many $x$, so here
$d(x)=d(x+a)=d(x+b)=192$.

If we assume $\mathrm{EH}(\mathcal{P}, \mathcal{E}_2)$, there are integers $a, b$ with $1\leq a < b \leq 4$ such that $x$, $x+a$, and $x+b$ all have exponent pattern $\{2,1,1,1\}$ for infinitely many $x$, so here $d(x)=d(x+a)=d(x+b)=24$.
\end{theorem}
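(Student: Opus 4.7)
The plan is to combine the sieve-theoretic results of \cref{S3} with congruence and pigeonhole arguments that force a specific exponent pattern on three shifts in a bounded window.

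First, I would apply \cref{S3} in the appropriate form to obtain a fixed admissible tuple $(h_1, \ldots, h_k)$ and infinitely many integers $x$ such that three shifts $x + h_{i_1}, x + h_{i_2}, x + h_{i_3}$, with $h_{i_3} - h_{i_1} \leq 9$ unconditionally or $\leq 4$ under $\mathrm{EH}(\mathcal{P}, \mathcal{E}_2)$, are each $j$-almost prime powers or $j$-almost primes for the corresponding $j$. Since the tuple is fixed, the number of choices of $(i_1, i_2, i_3)$ is finite, and so is the set of induced pairs $(a,b) = (h_{i_2} - h_{i_1}, h_{i_3} - h_{i_1})$. A first pigeonhole step isolates a single pair $(a,b)$ occurring for infinitely many $x$, and after translating we have three shifts $x, x+a, x+b$.

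Second, I would restrict $x$ to a sub-progression chosen by CRT so that each of the three shifts inherits additional small-prime divisibility. In the conditional case, one prescribes a prime $p_j$ for each shift with $p_j^2 \mid x+h_{i_j}$ but $p_j^3 \nmid x+h_{i_j}$, coprime to the primes appearing in the sieve output; this boosts the exponent pattern so that each shift has exactly $\omega = 4$ and $\Omega = 5$, which is realized only by the partition $\{2,1,1,1\}$. In the unconditional case, an analogous CRT prescription imposes both a squared and a cubed small prime at each shift, raising $(\omega,\Omega)$ to $(6,9)$ and isolating the pattern $\{3,2,1,1,1,1\}$ from the other six-part partitions of $9$. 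Because the sieve's lower bound on prime factors of its outputs yields an \emph{upper} bound on $\Omega(x+h_{i_j})$, only finitely many exponent patterns can occur at each shift; a second pigeonhole extracts an infinite subsequence of $x$ on which all three shifts realize the prescribed pattern, and the divisor counts $24$ and $192$ follow from $d(n) = \prod_i(e_i+1)$.

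The main obstacle is ensuring the CRT-imposed congruences are compatible with the admissibility of the tuple used in \cref{S3} and with the EH-type distribution hypothesis, without enlarging the gap bounds of $9$ and $4$ or destroying the positivity of the sieve weights. Equally delicate is verifying that the additional small-prime factors appear with exactly (not merely at least) the required multiplicities, that the primes $p_j$ chosen for different shifts do not accidentally coincide with each other or with the primes already present in the sieve-output factorization, and that the resulting pattern at each shift is uniformly the target partition rather than one of the other partitions of $\Omega$ consistent with the forced divisibilities.
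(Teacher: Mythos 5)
There is a genuine gap, in two places. First, your plan to ``restrict $x$ to a sub-progression chosen by CRT'' \emph{after} invoking \cref{S3} does not work as stated: the sieve input (\cref{thm:nu=2}) produces infinitely many $m$ for a \emph{fixed} admissible tuple, and nothing guarantees that infinitely many of these $m$ land in an arbitrary further congruence class, let alone one demanding exact prime-power divisibility $p_j^2\mid x+h_{i_j}$, $p_j^3\nmid x+h_{i_j}$. Any such congruence conditions must be built into the linear forms \emph{before} the sieve is applied, and one must then re-verify admissibility and check that the gap bounds do not grow. This is precisely what the paper's adjoining transformation $T_{A,B}$ accomplishes (\cref{adjointhm}): replacing $m$ by $Am+B$ multiplies each relation coefficient by a prescribed adjoining factor $g_i$ while preserving all relation values (hence the window sizes $4$ and $9$) and admissibility. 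In your write-up this mechanism is exactly the ``main obstacle'' you flag but never resolve.

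Second, even granting the congruence restriction, your pattern-forcing step fails. \cref{S3} only controls $\omega$ \emph{or} $\Omega$ of the three shifts (and via two different constructions, so not both simultaneously for the same $x$); it does not give the exact factorization shape, so imposing a square or a cube of a new small prime does not pin down the exponent pattern (e.g.\ a $3$-almost prime power could be $2^5\cdot 3\cdot 5$, and your arithmetic $(\omega,\Omega)=(6,9)$ is not even forced). Moreover your final pigeonhole only yields that \emph{some} pattern recurs infinitely often, whereas \cref{Div} asserts the specific patterns $\{2,1,1,1\}$ and $\{3,2,1,1,1,1\}$ with $d=24$ and $d=192$. The paper gets exactness for free from the structure of the sieve output: the three forms take values that are exactly $E_2$-numbers with both prime factors above a constant $C$ exceeding every relation coefficient, so each of $x$, $x+a$, $x+b$ equals a fixed coefficient $c_i=ig_i$ (whose exponent pattern is prescribed by the choice of $g_i$, e.g.\ $h(ig_i)=2^2\cdot 3$ conditionally and $h(ig_i)=2^3\cdot 3^2\cdot 5\cdot 7$ unconditionally) times a product of two large distinct primes; \cref{pattern}, applied to the homomorphic function $h$ on exponent patterns, then yields the stated patterns and divisor counts directly, with no pigeonhole and no post-hoc congruences.
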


\section{Notation and Preliminaries}

\begin{definition}
We define a \emph{linear form} to be an expression $L(m) = am+b$ where $a$ and $b$ are integers with $a>0$, and we say
$L$ is \emph{reduced} when $(a,b)=1$. We will view a linear form as both a polynomial and a function of $m\in\N$.
\end{definition}

Dirichlet's theorem on primes in arithmetic progressions guarantees that any reduced linear form will assume prime values infinitely often. In fact, if $L(m) = am+b$ is reduced and we define
\[\pi(x; a, b) = \#\{p\leq x: p = L(m) \mbox{ is prime for some } m\in\N\}\]
then we have the asymptotic
\begin{equation}\label{Dir}
\pi(x; a, b) \sim
    \frac{\mathrm{li}(x)}{\varphi(a)} \end{equation}
as $x\rightarrow \infty$ where $\mathrm{li}(x)$ is the logarithmic integral and $\varphi(a)$ denotes the Euler 
phi-function. Thus if we define the related 
prime counting function
\[\pi(x; L(m)) = \#\{ m\in \N : m\leq x \mbox{ and } L(m) \mbox{ is prime}\}\]
we get
\begin{equation}\label{Dirichlet}
\pi(x; L(m)) \sim
    \frac{a}{\varphi(a)}\cdot \frac{x}{\log x} \end{equation}
as $x\rightarrow \infty$.
The question of when two or 
more reduced linear forms simultaneously assume prime values infinitely often is addressed by the unproven Hardy-Littlewood prime $k$-tuples conjecture,
which states roughly that linear forms will simultaneously assume prime values infinitely often unless there is some obvious congruence preventing it. For
example, the triple of forms $m$, $m+2$, and $m+10$ cannot be simultaneously prime infinitely often since their product is congruent to zero modulo 3
for any $m$. To state a more precise form of the conjecture
we need a few definitions. Let $\mathcal{L}$ denote a $k$-tuple of linear forms, i.e., a sequence of $k$ distinct forms $L_1=a_1m+b_1$, $L_2 = a_2m+b_2$, $\ldots$, $L_k=a_km+b_k$.
Next, define a singular series for $\mathcal{L}$ via
\begin{equation*} \label{singular}
\mathfrak{S} (\mathcal{L})
 =
 \prod_{p~\mbox{\tiny prime}} \left(1 - \frac{1}{p}\right)^{-k} \left(1 - \frac{\nu_{\mathcal{L}}(p)}{p}\right),
\end{equation*}
where
$\nu_{\mathcal{L}}(p)$ denotes the number of solutions $m\in \{1, 2, \ldots, p\}$ to $\prod_{i=1}^k L_i(m) \equiv 0 \pmod{p}$. Note that if $\nu_{\mathcal{L}}(p) = p$ for some prime number $p$, then $\mathfrak{S}(\mathcal{L}) = 0$. When $\nu_{\mathcal{L}}(p) < p$, we say $\mathcal{L}$ is \emph{$p$-admissible}. If $\mathcal{L}$ is $p$-admissible for all prime numbers $p$, then  $\mathfrak{S}(\mathcal{L}) \neq 0$, and in this case the $k$-tuple $\mathcal{L}$ is called \emph{admissible}. Note that admissibility of $\mathcal{L}$ is equivalent to having all forms in $\mathcal{L}$ reduced and having $p$-admissibility for all primes $p\leq k$. We define a prime counting function for the $k$-tuple $\mathcal{L}$ as
\[\pi(x; \mathcal{L}) = \#\{ m\in \N : m\leq x \mbox{ and } L_i(m) \mbox{ is prime for all } i\}. \]
We can now state a generalization of the asymptotic in \cref{Dirichlet}.

\begin{conjecture}[Hardy-Littlewood Prime $k$-Tuple Conjecture]\label{HL}
For each fixed $k\geq 1$ and admissible $k$-tuple $\mathcal{L}$ as above, we have
\begin{equation} \label{hlconj}
\pi (x; \mathcal{L})
 \sim \mathfrak{S}(\mathcal{L}) \cdot \frac{x}{(\log x)^k}
\end{equation}
as $x\rightarrow \infty$.
\end{conjecture}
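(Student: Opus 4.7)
The statement above is the Hardy--Littlewood prime $k$-tuples conjecture, which is open for every $k\geq 2$; for $k=1$ it is essentially the prime number theorem in arithmetic progressions recorded in \cref{Dirichlet}. So my ``proof proposal'' is necessarily the standard local-to-global heuristic that predicts the constant $\mathfrak{S}(\mathcal{L})$, together with a description of the partial progress that actually exists.

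The plan is to derive the right-hand side of \cref{hlconj} from a product of local densities, one for each prime $p$. The \emph{true} probability that a random residue $m\pmod p$ makes $\prod_i L_i(m)\not\equiv 0\pmod p$ is $1-\nu_{\mathcal{L}}(p)/p$. If instead the events ``$L_i(m)$ is prime'' were genuinely independent with each occurring with probability $1/\log x$, the \emph{naive} local weight at $p$ would be $(1-1/p)^k$. Taking the ratio of true-local to naive-local density and multiplying across primes gives the Euler product defining $\mathfrak{S}(\mathcal{L})$, and the global count is then $\mathfrak{S}(\mathcal{L})(\log x)^{-k}$ per integer $m\leq x$, summing to $\mathfrak{S}(\mathcal{L})\,x/(\log x)^k$. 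Admissibility is exactly the condition that no Euler factor vanishes, and convergence of the product is automatic since $\nu_{\mathcal{L}}(p)=k$ for all primes $p$ larger than the discriminant of $\prod_i L_i$.

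The hard part is, essentially, everything else: turning this heuristic into a theorem. The classical route is the Hardy--Littlewood circle method applied to $\sum_{m\leq x}\prod_i \Lambda(L_i(m))e(m\alpha)$, with major arcs analyzed via zero-free regions of Dirichlet $L$-functions (which is where the singular series genuinely appears) and minor arcs requiring cancellation in exponential sums twisted by a product of von Mangoldt factors. The minor arc step is what breaks: no available technology gives square-root cancellation in such sums once $k\geq 2$. Sieve attacks supply upper bounds of the correct order $\ll \mathfrak{S}(\mathcal{L})\,x/(\log x)^k$ via Selberg or Brun, but the parity barrier blocks any lower bound of matching order. I therefore do not expect \cref{hlconj} to be proved in this paper; consistent with this, the subsequent sections either assume the conjecture (or the specialization $\mathrm{EH}(\mathcal{P},\mathcal{E}_2)$) or use the admissibility framework it motivates only as input to the GGPY and Maynard--Tao sieves underlying \cref{E2}, \cref{E3}, and \cref{S3}.
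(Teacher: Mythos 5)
This statement is the Hardy--Littlewood prime $k$-tuple conjecture, which the paper deliberately states as a \emph{conjecture} and explicitly describes as beyond current knowledge even for $k=2,3$, so there is no proof in the paper to compare against; your recognition of this, together with the standard local-density heuristic deriving $\mathfrak{S}(\mathcal{L})$ (admissibility as nonvanishing of the Euler factors, convergence from $\nu_{\mathcal{L}}(p)=k$ for large $p$, and the $k=1$ case reducing to \cref{Dirichlet}), is exactly the right assessment and consistent with how the paper uses the conjecture only as motivation and background. No correction is needed.
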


While a proof of the prime $k$-tuple conjecture appears beyond 
our current state of knowledge (even in the cases $k=2$ or $3$), we do know
the following major result which was proven in \cite{Gold1}.

\begin{theorem}\label{GGPY09}
Let $C$ be any constant and $\nu$ be any positive integer. Then for $k$ sufficiently large, every admissible $k$-tuple of linear forms has $\nu+1$ among them which infinitely often take $E_2$-numbers simultaneously as values with both prime factors above $C$.
\end{theorem}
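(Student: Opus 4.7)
The plan is to apply the GGPY sieve machinery. Given an admissible $k$-tuple $\mathcal{L} = (L_1, \ldots, L_k)$, I would consider the weighted sum
\[ \mathcal{S}(N) = \sum_{N < n \leq 2N} \left( \sum_{i=1}^k \mathbf{1}_{E_2,C}(L_i(n)) - \nu \right) w(n)^2, \]
where $\mathbf{1}_{E_2,C}(m) = 1$ when $m$ is an $E_2$-number with both prime factors greater than $C$, and $w(n)$ is a GGPY-style Selberg weight of the form
\[ w(n) = \frac{1}{(k+\ell)!} \sum_{\substack{d \mid L_1(n) \cdots L_k(n) \\ d \leq R}} \mu(d) \left(\log \frac{R}{d}\right)^{k+\ell}, \]
for parameters $\ell, R$ to be chosen later. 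If I can show $\mathcal{S}(N) > 0$ for arbitrarily large $N$, then infinitely often at least $\nu+1$ of the $L_i(n)$ are simultaneously $E_2$-numbers whose prime factors exceed $C$, which is precisely the conclusion.

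First I would evaluate the denominator sum $\sum_n w(n)^2$ asymptotically; this is the standard Selberg-type computation in the GPY framework and yields a main term proportional to $\mathfrak{S}(\mathcal{L})\, N\, (\log R)^{2k+2\ell+1}$, where the singular series is nonzero by admissibility. Next I would evaluate each numerator sum $\sum_n \mathbf{1}_{E_2,C}(L_i(n))\, w(n)^2$ by expanding the indicator as a sum over factorizations $L_i(n) = p_1 p_2$ with $C < p_1 \leq p_2$, interchanging summations, and reducing the inner sums to counts of $n$ in residue classes modulo $p_1 p_2$. Handling the resulting error terms is the main analytic input and requires a Bombieri--Vinogradov-type theorem for $E_2$-numbers, which is available unconditionally via Motohashi's convolution arguments and provides a level of distribution large enough to support the choice $R = N^{\theta/2-\epsilon}$.

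With both sums computed, the positivity of $\mathcal{S}(N)$ reduces to an inequality of the shape
\[ k \cdot F_C(k,\ell) > \nu \cdot G(k,\ell), \]
where $F_C$ and $G$ are explicit ratios of Beta-type integrals produced by the sieve. The heart of the argument, as in GGPY, is to choose $\ell$ with $\ell \asymp \sqrt{k}$ so that this ratio grows like a positive constant times $\log k$, which beats any fixed $\nu$ once $k$ is sufficiently large. The restriction to prime factors exceeding $C$ contributes only an Euler-product factor that modifies the implicit constant without affecting the growth rate in $k$, so $C$ may be taken arbitrary.

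The main obstacle will be the joint asymptotic evaluation of the numerator sum, since the Selberg weight and the $E_2$-indicator interact through shared small prime divisors of the $L_j(n)$ for $j \neq i$, and since the Bombieri--Vinogradov estimate for $E_2$-numbers must be applied uniformly in $p_1, p_2$ and across all $k$ forms. Once these technical estimates are in place, the combinatorial optimization that produces the required growth in $k$ follows the standard GGPY calculation, and the theorem follows by letting $N \to \infty$ along an increasing sequence for which $\mathcal{S}(N) > 0$.
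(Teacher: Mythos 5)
First, note that the paper does not actually prove \cref{GGPY09}; it quotes it from \cite{Gold1}, and the closest it comes to a proof is \cref{ggpy}, where the same sieve machinery is run only for the special case $\nu=2$, $k=10$ (and $k=5$ under $\mathrm{EH}(\mathcal{P},\mathcal{E}_2)$) with an explicit polynomial $P$. Your overall architecture does match the GGPY argument being cited: a sum of the form $\sum_{N<n\le 2N}\bigl(\sum_i \mathbf{1}(L_i(n))-\nu\bigr)w(n)^2$ with a GPY-type Selberg weight, Motohashi's Bombieri--Vinogradov theorem for $E_2$-numbers as the unconditional level-of-distribution input, and a reduction of positivity to an inequality between Beta-type integrals. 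Your remark that the condition ``both prime factors $>C$'' is harmless is also correct; in fact in the GGPY setup it comes for free, because the indicator is the sifted one $\beta$ with $p_1>N^{\eta}$, not merely $p_1>C$ (this lower bound being a power of $N$ also matters for the uniformity of the error-term analysis, which your version with $p_1>C$ alone would not provide).

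The genuine gap is in what you call the heart of the argument: the claim that choosing $\ell\asymp\sqrt{k}$ makes the relevant ratio grow like $\log k$. For the one-dimensional weight you wrote down, the ratio of the main terms is governed by factors of the shape $\frac{2(2\ell+1)}{\ell+1}\cdot\frac{k}{k+2\ell+1}\cdot\vartheta$, and with $\ell\asymp\sqrt{k}$ this tends to a \emph{bounded} constant, not to $\log k$; if $\ell\asymp\sqrt{k}$ alone produced logarithmic growth, the identical computation would yield $\nu+1$ primes in every large admissible tuple unconditionally, which this sieve cannot do (that obstruction is exactly why the multidimensional weights of \cite{Mayn} were needed in the prime case). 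The growth that beats an arbitrary fixed $\nu$ for $E_2$-numbers has a different, semiprime-specific source: the extra average over the smaller prime factor $p_1\in(N^{\eta},N^{1/2}]$ of $L_i(n)$, i.e.\ a factor of the shape $\sum_{p_1}1/p_1$, which in the notation of \cref{ggpy} is the integral $\int B\,dy/\bigl(y(B-y)\bigr)$ appearing in $J_1,J_2$. Its effective lower endpoint can be pushed down to roughly $\max(B\eta,\,c\ell/k)$, so choosing $\eta$ of size about $1/k$ yields a gain of order $\log k$, which is what overcomes $\nu J_0$ for $k$ large and produces the $k\approx e^{c\nu}$ dependence in \cite{Gold1}. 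As written, your inequality $k\,F_C(k,\ell)>\nu\,G(k,\ell)$ has a bounded left-to-right ratio and the positivity step fails for large $\nu$; the repair is to let the sum over $p_1$ (equivalently, the sifting parameter $\eta$ shrinking with $k$) carry the growth, exactly as in the structure of $J_1,J_2,J_3$ in \cref{ggpy}.
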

% The Hardy-Littlewood prime 3-tuple conjecture implies that the linear forms $m, m+2, m+6$ are all simultaneously prime for infinitely many $m$, so here $d(x) = d(x+2) = d(x+6) = 2$ for infinitely many positive integers $x$. What about non-admissible triples? They can never be prime tuples infinitely often. For example, the triple $m,m+1,m+2$ can never have more than two primes in it. In this situation,
% we can use relations among linear forms to show that the
% divisor function should assume a common value infinitely often for such triples.

The authors in \cite{Gold1} showed further that we may take $k=3$ when $\nu=1$. This means every admissible triple contains two forms
which simultaneously assume $E_2$-numbers with large prime factors as values infinitely often. For $\nu=2$ we have the following.

\begin{theorem}\label{thm:nu=2}
Let $C$ be any constant. Then every admissible $10$-tuple of linear forms has $3$ among them which infinitely often take $E_2$-numbers simultaneously as values with both prime factors above $C$. If we assume $\mathrm{EH}(\mathcal{P}, \mathcal{E}_2)$, then the same conclusion holds for every admissible $5$-tuple.
\end{theorem}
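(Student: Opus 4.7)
The plan is to apply the GGPY multi-dimensional sieve of \cite{Gold1} in the $\nu=2$ regime, in exact parallel with the proof of \cref{GGPY09}, but tracking the numerical constants carefully enough to pin down the admissible sizes $k=10$ and $k=5$. For a given admissible $k$-tuple $\mathcal{L}=(L_1,\dots,L_k)$ and a dyadic window $n\in (N,2N]$, form the weighted sum
\[
S(N,C) \;=\; \sum_{N<n\le 2N}\Bigl(\sum_{i=1}^{k}\mathbf{1}_{E_2,C}\bigl(L_i(n)\bigr)-2\Bigr)\,w_n,
\]
where $\mathbf{1}_{E_2,C}(m)$ is the indicator that $m$ is an $E_2$-number whose two prime factors both exceed $C$, and $w_n\ge 0$ is the GGPY sieve weight
\[
w_n \;=\; \Bigl(\sum_{\substack{d\mid \prod_{i}L_i(n)\\ d\le R}}\mu(d)\,Q\!\bigl(\log(R/d)\bigr)\Bigr)^{\!2},
\]
with $Q$ a polynomial to be chosen and $R=N^{\theta/2-\varepsilon}$. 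Here $\theta$ is the level of distribution: unconditionally $\theta=1/2$ (Bombieri-Vinogradov for primes together with its analogue for $E_2$-numbers), while under $\mathrm{EH}(\mathcal{P},\mathcal{E}_2)$ one may take any $\theta<1$. If $S(N,C)>0$ for a sequence $N\to\infty$, then at least three of the $L_i$ simultaneously take $E_2$-values with both prime factors above $C$ infinitely often, which is the desired conclusion.

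Next I would carry out the two asymptotic evaluations. Writing $S_1=\sum_n w_n$ and $S_2^{(j)}=\sum_n \mathbf{1}_{E_2,C}(L_j(n))\,w_n$, standard Selberg-type diagonalisation reduces $S_1$ to a singular-series factor $\mathfrak{S}(\mathcal{L})$ multiplied by an explicit multiple integral in $Q$ and its derivatives. For $S_2^{(j)}$, I would use the identity $\mathbf{1}_{E_2,C}(m)=\tfrac{1}{2}\#\{p_1p_2=m : p_1,p_2>C\}$ to break the detection into two prime variables; the asymptotic then folds into the sieve computation as in \cite{Gold1}, provided one can absorb error terms with moduli up to $R^2$, which is exactly what the definition of $R$ via $\theta$ permits. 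The result is a formula of the shape
\[
S_2^{(j)} \;\sim\; k\,\mathfrak{S}(\mathcal{L})\,J_2(Q,k,\theta)\cdot \frac{N}{(\log N)^{k}},\qquad S_1\;\sim\;\mathfrak{S}(\mathcal{L})\,J_1(Q,k,\theta)\cdot\frac{N}{(\log N)^{k+1}},
\]
so that the positivity of $S(N,C)$ is equivalent to the inequality
\[
\rho(Q,k,\theta) \;:=\; \frac{k\,J_2(Q,k,\theta)}{J_1(Q,k,\theta)}\;>\;2.
\]

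To finish, I would pick $Q(x)=x^{P}$ for a free integer parameter $P$, which is the same one-parameter family used throughout \cite{Gold1}; with this choice, $J_1$ and $J_2$ collapse to elementary ratios of factorials depending on $P$, $k$, and $\theta$. Optimising $P$ numerically produces an explicit table of $\rho$-values, and the core computation is to exhibit values of $P$ for which $\rho(x^P,10,1/2)>2$ and $\rho(x^P,5,\theta)>2$ for $\theta$ sufficiently close to $1$. Since the GGPY weight with $Q(x)=x^P$ degenerates continuously as $k$ decreases, verifying these two explicit inequalities is the main quantitative obstacle and is really the only step that does not carry over verbatim from \cite{Gold1}; the rest of the argument, including handling the singular-series factor and the $C$-truncation of small prime factors, is a straightforward adaptation of the computations already appearing there.
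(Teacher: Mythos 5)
Your skeleton is the same as the paper's: form the GGPY-weighted sum penalizing $-\nu=-2$, import the asymptotic evaluation from \cite{Gold1}, and reduce \cref{thm:nu=2} to the positivity of an explicit functional of the weight polynomial at level of distribution $\vartheta=1/2$ (resp.\ $\vartheta$ near $1$ under $\mathrm{EH}(\mathcal{P},\mathcal{E}_2)$). But the proof stands or falls on the final quantitative step, and that is exactly where your proposal has a genuine gap. You restrict to the one-parameter monomial family $Q(x)=x^{P}$ and assert that checking positivity for $k=10$, $\vartheta=1/2$ and $k=5$, $\vartheta<1$ is a routine numerical verification; you never perform it, and there is strong evidence it fails for this family. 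The paper has to use fully optimized quadratics, $P(x)=\tfrac{3}{20}+\tfrac{3}{5}x+10x^{2}$ for $k=10$ and $P(x)=\tfrac34+6x+10x^{2}$ for $k=5$, together with the sifting parameter $\eta=1/340$, and even then the unconditional margin is razor thin: $J=0.00003645\ldots$ against $\nu J_0\approx 0.046$. If a pure power $x^{P}$ sufficed, the lower-order coefficients would be superfluous; with so little room, discarding them is precisely the kind of loss that flips the sign. Relatedly, your claim that with $Q(x)=x^{P}$ the integrals ``collapse to elementary ratios of factorials'' is not correct in the $E_2$ setting: the two-prime detection introduces the kernel $B/\bigl(y(B-y)\bigr)$ and the cutoff at $B\eta$, so the relevant integrals evaluate to rationals plus logarithms (the paper's exact values involve $\log(113/85)$, $\log 3$, $\log(101/100)$), and the competition between these terms is where the optimization is delicate. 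Without exhibiting a specific weight and computing the functional, the theorem is not proved.

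A secondary, fixable issue: your indicator ``$E_2$ with both prime factors $>C$'' for a fixed constant $C$ does not plug directly into the asymptotics of \cite{Gold1}. The machinery evaluates the sifted indicator $\beta$ with $N^{\eta}<p_1\le N^{1/2}<p_2$ for a fixed $\eta\in(0,1/4]$; the parameter $\eta$ then appears as the lower endpoint $B\eta$ in the integrals, so it is not cosmetic. Allowing $p_1$ merely larger than a fixed $C$ (e.g.\ $p_1$ of size a power of $\log N$) is not covered by the level-of-distribution input you invoke. The correct route, as in the paper, is to prove positivity for the $\eta$-sifted count and then note that the conclusion about $C$ is automatic because $N^{\eta}\to\infty$.
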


The proof of this result, which is largely computational, is postponed until \cref{ggpy}, the last in the paper. However, \cref{thm:nu=2} allows us to give a quick proof of \cref{E2}.

\begin{proof}[Proof of \cref{E2}]
The $10$-tuple $m$, $m+2$, $m+6$, $m+8$, $m+12$, $m+18$, $m+20$, $m+26$, $m+30$, $m+32$ is admissible.  The $5$-tuple $m$, $m+2$, $m+6$, $m+8$, $m+12$ is admissible.
\end{proof}

We now develop the combinatorial tools needed to prove our main results, \cref{E3,S3,Div}.

\section{Relation Diagrams and Adjoining Transformations}

\cref{thm:nu=2} says that for $k\geq 10$ (or $k
\geq 5$ assuming $\mathrm{EH}(\mathcal{P}, \mathcal{E}_2)$), every admissible $k$-tuple 
$\mathcal{L}=(L_i)_{i=1}^k$ contains $3$ forms $L_h$, $L_i$, $L_j$ such that $L_h(m)$, $L_i(m)$, $L_j(m)$ are all $E_2$-numbers with large prime factors for each $m$ belonging to an infinite set $M$ of positive integers.
We can take integer combinations of these three forms to get small gaps between three almost primes. For example, if
$L_h=10m+1$, $L_i=15m+2$, $L_j=6m+1$,
then $3L_h = 2L_i - 1 = 5L_j - 2$,
so for each $m\in M$ we would get three $E_3$ numbers $3L_h(m)$, $2L_i(m)$, $5L_j(m)$
within a gap size of $2$. The difficulty here is that \cref{thm:nu=2}
does not specify which triple of forms $L_h$, $L_i$, $L_j$ in our $k$-tuple does the job; the theorem only guarantees that such a triple exists.
Thus we must find admissible $k$-tuples where every triple of forms contained in it
gives us small gaps for three of the same kind of almost primes.

\begin{definition}
Given two linear 
forms $L_1$ and $L_2$, a \emph{relation from $L_1$ to $L_2$} is an equation of the form
\begin{equation*}
    c_2\cdot L_2 - c_1 \cdot L_1 = r
\end{equation*} where $c_1$, $c_2$, and $r$ are all positive integers. We denote such a relation by
\begin{center}
\begin{tikzpicture}
  \coordinate [label={[xshift=0cm, yshift=-.3cm]$L_1$}] (L1) at (-1.5cm,0cm);
  \coordinate [label={[xshift=0cm, yshift=-.3cm]$L_2$}] (L2) at (1.5cm,0cm);
    ;
    \draw [thick, |->, shorten >= 9pt, shorten <= 9pt] (L1) -- (L2) node[midway,above] {$r$} node[pos=0.2,above] {\scriptsize $(c_1)$} node[pos=0.8,above] {\scriptsize $(c_2)$};
  \end{tikzpicture}
\end{center}
and we call $c_1$, $c_2$ the \emph{relation coefficients} and $r$ the \emph{relation value}. The \emph{distance} between $L_1$ and $L_2$, denoted by $\dist(L_1, L_2)$, is defined to be the minimum value of $r$ in any such relation.

A \emph{relation diagram} is a directed graph where the vertices are reduced linear forms and the edges are relations. Here we will always assume our diagrams are simple, i.e., have at most one edge between each pair of forms.

A \emph{triangle relation} for a triple of forms is a relation diagram of the form seen in \cref{fig:triangle}.
\begin{figure}[H]
 \begin{tikzpicture}
  \coordinate [label={[xshift=-0.1cm, yshift=-.3cm]$L_1$}] (L2) at (-2cm,-1.cm);
  \coordinate [label={[xshift=0cm, yshift=-.25cm]$L_2$}] (L1) at (0cm,1.0cm);
  \coordinate [label={[xshift=0.1cm, yshift=-.3cm]$L_3$}] (L3) at (2cm,-1.0cm);
    ;
    \draw [thick, |->, shorten >= 9pt, shorten <= 9pt] (L2) -- (L1) node[midway,above left] {$r_1$} node[pos=0.2,above left] {\scriptsize $(c_1)$} node[pos=0.8,above left] {\scriptsize $(c_2)$};
    \draw [thick, |->, shorten >= 9pt, shorten <= 9pt] (L1) -- (L3) node[midway,above right] {$r_2$} node[pos=0.2,above right] {\scriptsize $(c_2)$} node[pos=0.8,above right] {\scriptsize $(c_3)$};
    \draw [thick, |->, shorten >= 10pt, shorten <= 10pt] (L2) -- (L3) node[midway,below] {$r_1+r_2$} node[pos=0.15,below] {\scriptsize $(c_1)$} node[pos=0.85,below] {\scriptsize $(c_3)$};
  \end{tikzpicture}
    \caption{A General Triangle Relation}
    \label{fig:triangle}
\end{figure}
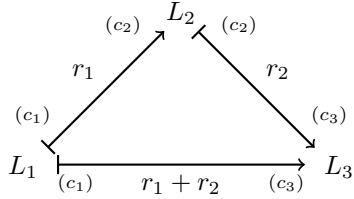
\noindent The 
\emph{diameter} of the triple, denoted by $\diam(L_1,L_2,L_3)$, is defined to be the minimum value of the sum
$r_1+r_2$ in any such triangle relation.
%Note that the diameter of the triple
%is always a multiple of $\dist(L_1,L_3)$ and is bounded below by $\dist(L_1,L_2)+\dist(L_2, L_3) \geq 2$.
\end{definition}
% \begin{definition}
% When there is a relation from one linear form $L_i$ to another $L_j$, we simply write $L_i\longmapsto L_j$. It is straightforward to show that every relation from $L_i$ to $L_j$ is a positive integer multiple of a minimal relation between the forms, which is the unique relation with minimal relation value.
% The minimal relation value is called the \emph{distance} from $L_i$ to $L_j$, denoted $\dist(L_i,L_j)$. If we let $[a,b]$ denote the least common multiple of positive integers $a$, $b$, then we have the following formula for the distance between two forms.
% \end{definition}

\begin{lemma}\label{lem:distance}
There is a strict total order on reduced forms given by
$L_i\longmapsto L_j$ whenever there is a relation from $L_i$ to $L_j$. Given a triple of linear forms $L_i(m) = a_im+b_i$ for $i=1,2,3$ with $L_1 \longmapsto L_2 \longmapsto L_3$ we have
$$\dist(L_1,L_2)=[a_1,a_2]\left(\frac{b_2}{a_2}-\frac{b_1}{a_1}\right)$$
and
$$\diam(L_1,L_2,L_3) = [a_1,a_2,a_3] \left(\frac{b_3}{a_3} - \frac{b_1}{a_1} \right)$$
where $[a_1,a_2]$ and $[a_1,a_2,a_3]$ denote least common multiples.
\end{lemma}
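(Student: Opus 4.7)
The plan is to translate each relation into a pair of scalar equations on the coefficients, parametrize the positive integer solutions, and then read off the minimum. First I would observe that the single equation $c_2L_2 - c_1L_1 = r$, written out as a polynomial identity in $m$, amounts to
\begin{equation*}
c_1 a_1 = c_2 a_2, \qquad c_2 b_2 - c_1 b_1 = r.
\end{equation*}
Setting $d=\gcd(a_1,a_2)$ and $a_i = d\alpha_i$, the first equation becomes $c_1\alpha_1 = c_2\alpha_2$ with $\gcd(\alpha_1,\alpha_2)=1$, so its positive integer solutions are exactly $(c_1,c_2)=t(\alpha_2,\alpha_1)$ for $t\in\N$. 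The relation value becomes $r=t(\alpha_1 b_2 - \alpha_2 b_1)$, which is a positive integer iff $b_1/a_1 < b_2/a_2$, and the minimum positive value is attained at $t=1$, giving
\begin{equation*}
\dist(L_1,L_2) = \alpha_1 b_2 - \alpha_2 b_1 = [a_1,a_2]\left(\frac{b_2}{a_2}-\frac{b_1}{a_1}\right).
\end{equation*}

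Next I would derive the ordering statement from this computation. The existence of an arrow $L_i\longmapsto L_j$ is equivalent to $b_i/a_i < b_j/a_j$. Because each $L_i$ is reduced, the rational number $b_i/a_i$ determines $(a_i,b_i)$, so distinct reduced forms have distinct ratios; trichotomy, transitivity, and irreflexivity of $\longmapsto$ then follow directly from the corresponding properties of the usual order on $\Q$.

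For the triangle, the essential new point is that the coefficient $c_2$ must agree across the two relations $L_1\longmapsto L_2$ and $L_2\longmapsto L_3$, so they cannot be optimized independently. Writing out all three defining identities forces
\begin{equation*}
c_1 a_1 = c_2 a_2 = c_3 a_3 =: k,
\end{equation*}
where $k$ must be a positive common multiple of $a_1,a_2,a_3$, and $c_i = k/a_i$. A telescoping computation then gives
\begin{equation*}
r_1 + r_2 = c_3 b_3 - c_1 b_1 = k\left(\frac{b_3}{a_3}-\frac{b_1}{a_1}\right),
\end{equation*}
which is strictly increasing in $k$ since $b_1/a_1 < b_3/a_3$. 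The minimum is therefore attained at the smallest admissible $k$, namely $k = [a_1,a_2,a_3]$, yielding the claimed formula. At this value the individual $r_1$ and $r_2$ remain positive because $b_1/a_1 < b_2/a_2 < b_3/a_3$ by the total order, so the triangle relation is genuinely realized.

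The only real obstacle is the coupling in the last paragraph: one might naively hope $\diam(L_1,L_2,L_3) = \dist(L_1,L_2) + \dist(L_2,L_3)$, but the shared middle coefficient forces $k$ to be a common multiple of all three leading coefficients rather than of two at a time, so the optimal triangle is in general strictly larger than the sum of the optimal edges. Once this coupling is recognized, the rest is routine algebra.
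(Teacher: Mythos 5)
Your proof is correct and follows essentially the same route as the paper's: both reduce a relation to the coefficient conditions $c_1a_1=c_2a_2$ (resp.\ $c_1a_1=c_2a_2=c_3a_3$) together with $r = v\left(\frac{b_j}{a_j}-\frac{b_i}{a_i}\right)$ for the common value $v$, and then minimize by taking $v$ to be the least common multiple. Your explicit parametrization of the solutions and the check that the minimizing triangle relation is actually realized merely spell out details the paper leaves implicit.
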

\begin{proof}
Note that since we assume linear coefficients and relation values are positive, $L_i\longmapsto L_j$ is equivalent to the inequality $b_i/a_i < b_j/a_j$. This gives us transitivity. Trichotomy follows from this along with the assumption the forms are reduced. To prove the formula for distance, we need to minimize $r = c_2b_2-c_1b_1 = v(b_2/a_2-b_1/a_1)$ subject to the condition $v=c_2a_2=c_1a_1$ for positive integers $c_1, c_2$. The smallest value for $v$ is $[a_1,a_2]$. To prove the formula for diameter, we need to minimize $r_1+r_2 = c_3b_3-c_1b_1 = v(b_3/a_3-b_1/a_1)$ subject to the condition $v=c_3a_3=c_2a_2=c_1a_1$ for positive integers $c_1, c_2, c_3$. The smallest value for $v$ is $[a_1,a_2,a_3]$.
 \end{proof}

\begin{definition}
A relation diagram on a $k$-tuple $\mathcal{L}= (L_i)_{i=1}^k$ of linear forms is called \emph{consistent} if the graph is complete with edges of the form
\begin{center}
\begin{tikzpicture}
  \coordinate [label={[xshift=0cm, yshift=-.3cm]$L_i$}] (L1) at (-1.5cm,0cm);
  \coordinate [label={[xshift=0cm, yshift=-.35cm]$L_j$}] (L2) at (1.5cm,0cm);
    ;
    \draw [thick, |->, shorten >= 9pt, shorten <= 9pt] (L1) -- (L2) node[midway,above] {$r_{i,j}$} node[pos=0.2,above] {\scriptsize $(c_i)$} node[pos=0.8,above] {\scriptsize $(c_j)$};
  \end{tikzpicture}
\end{center}
so each form $L_i$ has the same relation coefficient $c_i$ in all of its edges. Triangle relations, for instance, are consistent, and the induced subgraph for any triple in a consistent diagram is a triangle relation. We say that a consistent diagram is also \emph{$f$-compatible} for some arithmetic function $f$ when $f(c_i)$ is constant across all $i$.
\end{definition}
\begin{definition}
Let $f$ be an arithmetic function. We say $f$ is a \emph{function on exponent patterns} if $f(n)=f(m)$ whenever $m$ and $n$ have the same exponent pattern. We say $f$ is \emph{homomorphic} if the following property holds: $f(a)=f(b)$ and $f(c)=f(d)$ with $(a,c)=1=(b,d)$ implies $f(ac) = f(bd)$.
\end{definition}
\begin{remark}
Note that any multiplicative or additive arithmetic function
is homomorphic. Examples of arithmetic functions which are both homomorphic and functions on exponent patterns include $d$ (number of divisors), $\omega$ (number of distinct prime factors), $\Omega$ (number of prime factors), or the function $h$ defined by $h(n) =$ least positive integer having the same exponent pattern as $n$. Here the function $h$ is neither additive nor multiplicative. The average exponent of $n$ is an example of a function on exponent patterns which is not homomorphic.
\end{remark}

\begin{lemma}\label{pattern}
Suppose $k\geq 10$ (or $k\geq 5$ assuming $\mathrm{EH}(\mathcal{P}, \mathcal{E}_2)$) and that we have an admissible $k$-tuple $\mathcal{L}= (L_i)_{i=1}^k$ of linear forms satisfying
a consistent relation diagram with edges
\begin{center}
\begin{tikzpicture}
  \coordinate [label={[xshift=0cm, yshift=-.3cm]$L_i$}] (L1) at (-1.5cm,0cm);
  \coordinate [label={[xshift=0cm, yshift=-.35cm]$L_j$}] (L2) at (1.5cm,0cm);
    ;
    \draw [thick, |->, shorten >= 9pt, shorten <= 9pt] (L1) -- (L2) node[midway,above] {$r_{i,j}$} node[pos=0.2,above] {\scriptsize $(c_i)$} node[pos=0.8,above] {\scriptsize $(c_j)$};
  \end{tikzpicture}
\end{center}
whenever $i<j$.
If the diagram is also  $f$-compatible for some homomorphic function $f$ on exponent patterns, then there is a constant $c$ and integers $a$, $b$ with $$r_{\mathrm{min}} \leq a<b\leq r_{\mathrm{max}}$$
such that
\[f(x) = f(x+a) = f(x+b)=c\]
for infinitely many $x$ where $r_{\mathrm{min}}$ (resp. $r_{\mathrm{max}}$) denotes the minimum (resp. maximum) relation value in the diagram.
\end{lemma}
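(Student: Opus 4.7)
The plan is to combine \cref{thm:nu=2} with the combinatorial structure of the consistent $f$-compatible diagram. First, I would choose a constant $C$ larger than every relation coefficient $c_\ell$ appearing in the diagram, and apply \cref{thm:nu=2} to the admissible $k$-tuple $\mathcal{L}$ with this $C$. This produces an infinite set $M \subset \N$ and three indices $h<i<j$ such that $L_h(m)$, $L_i(m)$, $L_j(m)$ are simultaneously $E_2$-numbers with both prime factors above $C$ for every $m \in M$. Although the theorem does not specify which triple $(h,i,j)$ is produced, the consistency hypothesis on the diagram ensures that the argument below goes through for any such triple.

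By \cref{lem:distance} and consistency, the induced triangle relation on $(L_h, L_i, L_j)$ has the form $c_iL_i - c_hL_h = r_{h,i}$, $c_jL_j - c_iL_i = r_{i,j}$, and $c_jL_j - c_hL_h = r_{h,j} = r_{h,i}+r_{i,j}$. Setting $x_m := c_h L_h(m)$, the three positive integers $c_h L_h(m)$, $c_i L_i(m)$, $c_j L_j(m)$ are precisely $x_m$, $x_m + r_{h,i}$, and $x_m + r_{h,j}$. Taking $a := r_{h,i}$ and $b := r_{h,j}$, the fact that every edge value lies in $[r_{\min}, r_{\max}]$ immediately yields $r_{\min} \leq a < b \leq r_{\max}$ (strictness follows from the total order in \cref{lem:distance}).

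It remains to show that $f$ takes a common value on these three integers, and that this value is independent of $m$. Each $L_\ell(m)$ with $\ell \in \{h,i,j\}$ is an $E_2$-number, so the three values share the exponent pattern $\{1,1\}$; since $f$ is a function on exponent patterns, $f(L_h(m)) = f(L_i(m)) = f(L_j(m))$, and this value does not depend on $m \in M$. The $f$-compatibility of the diagram gives $f(c_h) = f(c_i) = f(c_j)$. Because $C$ exceeds every $c_\ell$ and every prime factor of $L_\ell(m)$ exceeds $C$, we have $\gcd(c_\ell, L_\ell(m)) = 1$ for each $\ell$ and each $m\in M$. Applying the homomorphic property pairwise then yields
\[
f(c_h L_h(m)) = f(c_i L_i(m)) = f(c_j L_j(m)),
\]
and a further application of homomorphism comparing two elements $m, m' \in M$ shows this common value is constant in $m$. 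Calling this constant $c$, we obtain $f(x_m) = f(x_m+a) = f(x_m+b) = c$ for every $m \in M$, and since $x_m \to \infty$ there are infinitely many such $x$.

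The main obstacle, while not deep, is the careful bookkeeping around the homomorphic property: one must verify coprimality (which is why $C$ must be enlarged beyond the relation coefficients) and iterate the property to handle both the equality across the three forms and the constancy in $m$. Once these details are in place, the proof amounts to transporting the $E_2$-triple produced by \cref{thm:nu=2} through the fixed triangle relation structure.
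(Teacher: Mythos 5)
Your proposal is correct and follows essentially the same argument as the paper: invoke \cref{thm:nu=2} with $C$ exceeding all relation coefficients, set $x = c_hL_h(m)$ so that consistency gives $x+r_{h,i}=c_iL_i(m)$ and $x+r_{h,j}=c_jL_j(m)$, and then combine $f$-compatibility, the exponent-pattern property on $E_2$-values, and the homomorphic property (valid by the coprimality coming from the large prime factors) to get the constant $c$. The only cosmetic difference is that you spell out the coprimality and the constancy-in-$m$ step slightly more explicitly than the paper does.
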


\begin{proof}
By \cref{thm:nu=2}, there are $3$ 
forms $L_h$, $L_i$, $L_j$ in the admissible $k$-tuple $\mathcal{L}$ such that $L_h(m)$, $L_i(m)$, $L_j(m)$
are all $E_2$-numbers for each $m$ belonging to an infinite set $M$ of positive integers. If we take
\begin{equation}\label{x}
x=c_{h}L_h(m)
\end{equation}
for each $m\in M$, then
\begin{equation}\label{xa}
    x+r_{h,i} = c_{i}L_i(m)
\end{equation} and, since the diagram is consistent,
\begin{equation}\label{xb}
    x+r_{h,j} = x+r_{h,i}+r_{i,j} = c_iL_i(m) + r_{i,j} = c_{j}L_j(m).
\end{equation}
Also, $f(c_{h}) = f(c_i) = f(c_{j})$ since the diagram is $f$-compatible and $f(L_h(m))= f(L_i(m))= f(L_j(m))$ is constant
for all $m\in M$
since $f$ is a function on exponent patterns and $L_h(m)$, $L_i(m)$, and $L_j(m)$ are all $E_2$-numbers. In fact, we may assume the prime factors of
$L_h(m)$, $L_i(m)$, and $L_j(m)$ are all larger than any of the relation coefficients in the diagram, so since $f$ is homomorphic, we get $c \mathrel{\mathop:}=f(c_{h}L_h(m)) = f(c_{i}L_i(m)) = f(c_{j}L_j(m))$ is a constant across all $m
\in M$. In fact, $c$ is also independent of $h,i,j$ by $f$-compatibility. The result now follows by inspection of \cref{x,xa,xb}.
\end{proof}

\begin{example}\label{diagram}
Consider the relation diagram in \cref{fig:P3}.
\begin{figure}
 \begin{tikzpicture}
    \node[
      regular polygon,
      regular polygon sides=5,
      minimum width=80mm,
    ] (L) {}
      (L.corner 1) node (L1) {$2m+ 1$}
      (L.corner 2) node (L2) {$3m + 2$}
      (L.corner 3) node (L3) {$6m+ 5$}
      (L.corner 4) node (L4) {$6m+7$}
      (L.corner 5) node (L5) {$3m+ 4$}
    ;
    \draw [thick, |->, shorten >= 6pt, shorten <= 6pt] (L1) -- (L2) node[midway,above left] {1} node[pos=0.1,above left] {\scriptsize (3)} node[pos=0.9,above left] {\scriptsize (2)};
    \draw [thick, |->, shorten >= 6pt, shorten <= 6pt] (L1) -- (L3) node[midway,above left] {2} node[pos=0.13,above left] {\scriptsize (3)} node[pos=0.9,above left] {\scriptsize (1)};
    \draw [thick, |->, shorten >= 6pt, shorten <= 6pt] (L1) -- (L4) node[midway,above right] {4} node[pos=0.13,above right] {\scriptsize (3)} node[pos=0.9,above right] {\scriptsize (1)};
    \draw [thick, |->, shorten >= 6pt, shorten <= 6pt] (L1) -- (L5) node[midway,above right] {5} node[pos=0.1,above right] {\scriptsize (3)} node[pos=0.9,above right] {\scriptsize (2)};
    \draw [thick, |->, shorten >= 6pt, shorten <= 6pt] (L2) -- (L3) node[midway,below left] {1} node[pos=0.05,below left] {\scriptsize (2)} node[pos=0.85,below left] {\scriptsize (1)};
    \draw [thick, |->, shorten >= 6pt, shorten <= 6pt] (L2) -- (L4) node[midway,below left] {3} node[pos=0.11,below left] {\scriptsize (2)} node[pos=0.9,below left] {\scriptsize (1)};
    \draw [thick, |->, shorten >= 6pt, shorten <= 6pt] (L2) -- (L5) node[midway,above] {4} node[pos=0.1,above] {\scriptsize (2)} node[pos=0.9,above] {\scriptsize (2)};
    \draw [thick, |->, shorten >= 4pt, shorten <= 4pt] (L3) -- (L4) node[midway,below] {2} node[pos=0.11,below] {\scriptsize (1)} node[pos=0.9,below] {\scriptsize (1)};
    \draw [thick, |->, shorten >= 6pt, shorten <= 6pt] (L3) -- (L5) node[midway,below right] {3} node[pos=0.11,below right] {\scriptsize (1)} node[pos=0.9,below right] {\scriptsize (2)};
    \draw [thick, |->, shorten >= 6pt, shorten <= 6pt] (L4) -- (L5) node[midway,below right] {1} node[pos=0.13,below right] {\scriptsize (1)} node[pos=0.95,below right] {\scriptsize (2)};
  \end{tikzpicture}
    \caption{A Consistent Diagram on an Admissible $5$-Tuple}
    \label{fig:P3}
\end{figure}
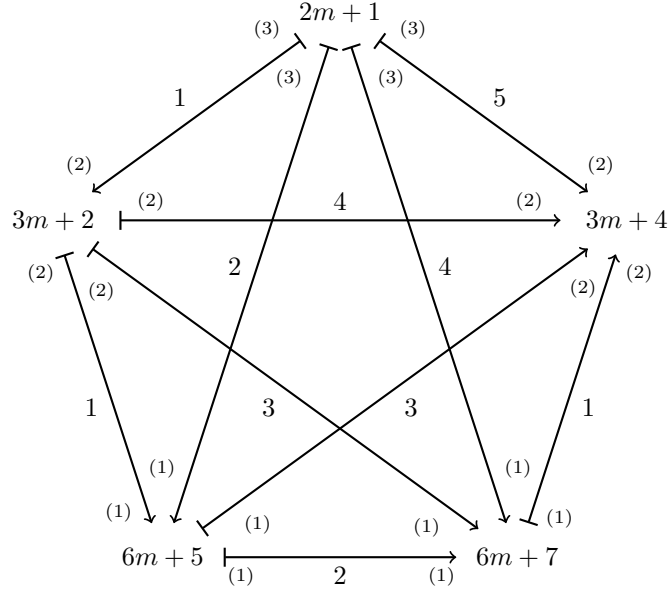
The $5$-tuple is admissible and the diagram is consistent, but the diagram is not $f$-compatible for any $f\in \{d,\omega,\Omega,h\}$. However, all the relation coefficients are either $1$ or a prime, and we show later how to ``adjoin'' primes to get a consistent diagram on an admissible tuple with all prime relation coefficients, which will yield a new diagram that is both $\Omega$- and $\omega$-compatible.
\end{example}

\begin{example}
Consider the $6$-tuple $24m +5$, $90m+ 19$, $288m+ 61$, $33m+ 7$, $80m+ 17$, $108m+ 23$. The tuple is admissible and the distance between any two forms here is $1$, yet $\max_{h<i<j} \diam(L_h,L_i,L_j)=20$. In general,
the maximum diameter for triples within a $k$-tuple can never be 2 (the minimal value for diameters of triples) unless $k=3$. A general lower bound on the maximum diameter follows from Graham's Conjecture \cite{Grah}, which is now a theorem proven by Balasubramanian and Soundararajan in \cite{Bala}.
\end{example}

\begin{theorem}[Graham's Conjecture]\label{graham}
Suppose $m_1$, $m_2$, $\ldots$, $m_n$ are distinct positive integers. Then there are $i$, $j$ such that
\begin{equation*}
    \frac{m_j}{(m_i,m_j)} \geq n.
\end{equation*}
\end{theorem}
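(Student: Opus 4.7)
The plan is to follow the approach of Balasubramanian and Soundararajan in \cite{Bala}. Argue by contradiction, assuming $m_j/(m_i,m_j) < n$ for every pair $1 \leq i, j \leq n$. A first reduction: dividing each $m_i$ by $(m_1,\ldots,m_n)$ preserves every ratio $m_j/(m_i,m_j)$, so we may assume the tuple is primitive, meaning $(m_1,\ldots,m_n) = 1$.

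The central observation is a prime-power constraint. For each prime $p$, the $p$-part of $m_j/(m_i,m_j)$ equals $p^{\max(0,\, v_p(m_j) - v_p(m_i))}$, and it divides the integer $m_j/(m_i,m_j)$, which is less than $n$. Hence $v_p(m_j) - v_p(m_i) < \log_p n$ for every prime $p$ and every pair $(i,j)$, so the valuations $\{v_p(m_i)\}_{i=1}^n$ all lie in an integer interval of length less than $\log_p n$. Combined with primitivity, this forces $v_p(m_i) \leq \lfloor \log_p(n-1) \rfloor$ for every $i$ and every prime $p$, and no prime exceeding $n-1$ divides any $m_i$. Consequently each $m_i$ is a divisor of $\lcm(1, 2, \ldots, n-1)$, a very rigid structural conclusion from which we hope to derive a contradiction.

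From this foothold the argument bifurcates. For $n$ above an explicit threshold, a counting/entropy-style argument (originating in Szegedy's partial result and sharpened by Balasubramanian and Soundararajan) shows that the valuation constraints are too restrictive to accommodate $n$ distinct primitive values, producing the contradiction. For the remaining small and moderate values of $n$, the proof proceeds by a careful case analysis on the small-prime signatures, repeatedly exploiting primitivity (some $m_i$ is coprime to $2$, some is coprime to $3$, and so on) to rule out the surviving configurations one family at a time.

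The main obstacle is this boundary regime: the asymptotic counting is clean only once $n$ is genuinely large, while the difficult remaining cases require an intricate, essentially ad hoc prime-signature analysis with no conceptually uniform shortcut currently known. It was precisely closing out this boundary regime that resisted proof for several decades after Graham posed the conjecture, and any self-contained write-up would have to devote the bulk of its effort there rather than to the relatively transparent valuation setup sketched above.
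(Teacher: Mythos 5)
The paper offers no proof of this statement at all: it is imported as a known theorem of Balasubramanian and Soundararajan \cite{Bala}, so the only thing to compare your write-up against is that citation. Your preliminary reductions are correct as far as they go: dividing by the common gcd to assume primitivity preserves each ratio $m_j/(m_i,m_j)$; under the contradiction hypothesis the $p$-part of $m_j/(m_i,m_j)$ is $p^{\max(0,\,v_p(m_j)-v_p(m_i))} < n$, so with primitivity every valuation satisfies $p^{v_p(m_i)} \leq n-1$, no prime $p > n-1$ divides any $m_i$, and hence each $m_i$ divides $\lcm(1,2,\ldots,n-1)$. But this is only the standard opening move, and it is nowhere near the theorem.

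The genuine gap is everything after ``the argument bifurcates.'' You assert that a counting/entropy-style argument disposes of all $n$ beyond an explicit threshold and that an ad hoc prime-signature case analysis handles the boundary regime, but neither is formulated, let alone carried out: no threshold is named, no counting inequality is stated, no case structure is described, and your own final paragraph concedes that the bulk of the work is being deferred. That deferred part is precisely the content of the Balasubramanian--Soundararajan proof, which occupies dozens of pages and relies on quantitative analytic estimates (e.g.\ for integers free of large prime factors), not merely on the divisor-of-$\lcm$ observation you reach. As written, this is a historically reasonable roadmap, not a proof. If the goal is to match the paper, the honest course is to do what the paper does and simply cite \cite{Bala}; if the goal is a self-contained argument, essentially all of the work remains to be done.
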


\begin{theorem}\label{diametertheorem}
Given an integer $k \geq 3$ and a $k$-tuple of linear forms $L_1 \longmapsto L_2 \longmapsto \cdots \longmapsto L_k$,
we have
$$\max_{h<i<j} \diam(L_h,L_i,L_j) \geq k-1.$$
\end{theorem}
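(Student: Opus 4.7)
The plan is to apply \cref{graham} to an auxiliary set of $k-1$ distinct positive integers built from the linear forms; the pair of indices it returns, together with $L_1$, will yield a triple whose diameter is at least $k-1$. Set $N = [a_1,\ldots,a_k]$ and for each $i$ put $B_i := (N/a_i)b_i = N\beta_i$, where $\beta_i = b_i/a_i$. Since $a_i \mid N$ the $B_i$ are integers, and the chain $L_1 \longmapsto \cdots \longmapsto L_k$ translates (via \cref{lem:distance}) into $\beta_1 < \cdots < \beta_k$, so $B_1 < \cdots < B_k$. Shifting, the values $\tilde B_i := B_i - B_1$ yield $k-1$ distinct positive integers $\tilde B_2,\ldots,\tilde B_k$.

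Applying \cref{graham} to $\{\tilde B_2,\ldots,\tilde B_k\}$ (and, if necessary, swapping so that the larger integer is in the numerator) produces indices $1 < i < j \le k$ with $\tilde B_j/\gcd(\tilde B_i,\tilde B_j) \ge k-1$. I then claim the triple $(L_1,L_i,L_j)$ works. From \cref{lem:distance},
\[
\diam(L_1,L_i,L_j) = [a_1,a_i,a_j](\beta_j-\beta_1) = \frac{\tilde B_j}{N/[a_1,a_i,a_j]}.
\]
Writing $N_\ell := N/[a_1,a_\ell]$, the distance formula in \cref{lem:distance} also gives $\tilde B_\ell = \dist(L_1,L_\ell)\,N_\ell$, so $N_\ell \mid \tilde B_\ell$. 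Moreover, since $N_i,N_j$ are complementary divisors of $N$, one has the divisor-lattice identity $\gcd(N_i,N_j) = N/\lcm([a_1,a_i],[a_1,a_j]) = N/[a_1,a_i,a_j]$. Thus $N/[a_1,a_i,a_j]$ divides both $\tilde B_i$ and $\tilde B_j$, hence divides $\gcd(\tilde B_i,\tilde B_j)$, and so
\[
\diam(L_1,L_i,L_j) = \frac{\tilde B_j}{N/[a_1,a_i,a_j]} \;\ge\; \frac{\tilde B_j}{\gcd(\tilde B_i,\tilde B_j)} \;\ge\; k-1.
\]

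The substantive obstacle is finding the right reformulation so that Graham's pairwise gcd bound converts cleanly into the diameter bound; the divisor-lattice identity $\gcd(N/[a_1,a_i],\,N/[a_1,a_j]) = N/[a_1,a_i,a_j]$ is the bridge that makes Graham's inequality directly upper bound the denominator $N/[a_1,a_i,a_j]$ appearing in the diameter. Once this identity and the factorization $\tilde B_\ell = \dist(L_1,L_\ell)\,N_\ell$ are noticed, the argument reduces to the short chain of inequalities above.
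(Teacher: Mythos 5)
Your proposal is correct and follows essentially the same route as the paper: the integers $\tilde B_i = N(\beta_i-\beta_1)$ are exactly the paper's auxiliary integers $m_i = A\bigl(\tfrac{b_i}{a_i}-\tfrac{b_1}{a_1}\bigr)$, Graham's theorem (\cref{graham}) is applied to the same $(k-1)$-element set, and the final chain of inequalities via the common divisor $N/[a_1,a_i,a_j]$ of $\tilde B_i,\tilde B_j$ matches the paper's argument. Your only addition is an explicit justification (via $\tilde B_\ell = \dist(L_1,L_\ell)\,N/[a_1,a_\ell]$ and the divisor-lattice identity) of the divisibility fact the paper merely asserts.
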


\begin{proof}
Write $L_i(m) = a_im+b_i$ so that $b_1/a_1 < b_2/a_2 < \ldots < b_k/a_k$. Take $A = [a_1,a_2, \ldots, a_k]$ and for $i\geq 2$ define
\begin{equation*}
    m_i = A\left(\frac{b_i}{a_i} - \frac{b_1}{a_1} \right).
\end{equation*}
Each $m_i$ is an integer and $0<m_2<m_3<\ldots <m_k$. Thus by \cref{graham} there are $i$, $j$ such that
\begin{equation*}
    \frac{m_j}{(m_i,m_j)} \geq k-1.
\end{equation*}
Note that $A/[a_1,a_i,a_j]$ is a common divisor of $m_i$, $m_j$, so by \cref{lem:distance}
\begin{align*}
    \diam(L_1,L_i,L_j) &\geq  [a_1,a_i,a_j]\left(\frac{b_j}{a_j} - \frac{b_1}{a_1} \right) =\frac{m_j}{A/[a_1,a_i,a_j]} \\
    &\geq \frac{m_j}{(m_i,m_j)} \geq k-1.
\end{align*}
\end{proof}

\begin{remark}
\cref{diametertheorem} illustrates that there are fundamental restrictions for the gap sizes that we can obtain with these methods. In particular, the upper bound $r_{\mathrm{max}}$ for the gap size $b$ in the multiple shift $x$, $x+a$, $x+b$ of  \cref{pattern} satisfies
$$r_{\mathrm{max}} \geq \max_{h<i<j} \diam(L_h,L_i,L_j) \geq k-1.$$
On the other hand, the gap sizes we get are much better than what one obtains from using sieve methods alone. For example, \cref{E2} says there are infinitely many triples of $E_2$-numbers within a gap size of $32$ (or $12$ assuming $\mathrm{EH}(\mathcal{P}, \mathcal{E}_2)$), yet \cref{E3} says there are infinitely many triples of $E_3$-numbers within a gap size of $15$ (or $5$ assuming $\mathrm{EH}(\mathcal{P}, \mathcal{E}_2)$). The reason this works is that we are using unsifted prime factors in our $E_3$-numbers,
which means we allow small prime factors like $2,3,5,7$ in order to get small maximum diameters. The main tool we use is a certain transformation on tuples which preserves relation values in diagrams, can make a non-admissible tuple admissible, and produces an $f$-compatible diagram while maintaining admissibility.
\end{remark}

\begin{definition}
For $A,B\in \Z$ with $A>0$, we define the \emph{adjoining transformation} $T_{A,B}$ from the set of reduced linear forms to itself via $T_{A,B}(L(m)) = L(Am+B)/g_L$ where $g_L=(aA, aB+b)$ is called the \emph{adjoining factor} for the form $L(m)=am+b$ under the transformation $T_{A,B}$.
\end{definition}

\begin{lemma}\label{adjointhm}
Suppose $\mathcal{L}= (L_i)_{i=1}^k$ is a $k$-tuple of reduced linear forms $L_i(m)=a_im+b_i$ satisfying
a relation diagram with edges
\begin{center}
\begin{tikzpicture}
  \coordinate [label={[xshift=0cm, yshift=-.3cm]$L_i$}] (L1) at (-1.5cm,0cm);
  \coordinate [label={[xshift=0cm, yshift=-.35cm]$L_j$}] (L2) at (1.5cm,0cm);
    ;
    \draw [thick, |->, shorten >= 9pt, shorten <= 9pt] (L1) -- (L2) node[midway,above] {$r_{i,j}$} node[pos=0.2,above] {\scriptsize $(c_{i,j})$} node[pos=0.8,above] {\scriptsize $(c_{j,i})$};
  \end{tikzpicture}
\end{center}
Then the following hold:
\begin{enumerate}[(a)]
    \item For every adjoining transformation $T_{A,B}$, the $k$-tuple $T_{A,B}(\mathcal{L}) = (T_{A,B}(L_i))_{i=1}^k$ satisfies a relation diagram with edges
    \begin{center}
\begin{tikzpicture}
  \coordinate [label={[xshift=-0.6cm, yshift=-.35cm]$T_{A,B}(L_i)$}] (L1) at (-1.75cm,0cm);
  \coordinate [label={[xshift=0.6cm, yshift=-.35cm]$T_{A,B}(L_j)$}] (L2) at (1.75cm,0cm);
    ;
    \draw [thick, |->, shorten >= 9pt, shorten <= 9pt] (L1) -- (L2) node[midway,above] {$r_{i,j}$} node[pos=0.2,above] {\scriptsize $(g_ic_{i,j})$} node[pos=0.8,above] {\scriptsize $(g_jc_{j,i})$};
  \end{tikzpicture}
\end{center}
where the $g_i$ are the adjoining factors for $L_i$ under $T_{A,B}$. Thus if the diagram for $\mathcal{L}$ is consistent, so is the diagram for $T_{A,B}(\mathcal{L})$.
\item We have inequalities
\begin{eqnarray*}
    &\dist(T_{A,B}(L_i),T_{A,B}(L_j))\leq \dist(L_i,L_j) \\
   &\diam(T_{A,B}(L_h),T_{A,B}(L_i),T_{A,B}(L_j))\leq \diam(L_h,L_i,L_j)
\end{eqnarray*}
\item If $\mathcal{L}$ is admissible, then for every choice of positive integers $g_1$, $g_2$, $\ldots$, $g_k$ such that $(g_i,a_i) = (g_i, a_ib_j-a_jb_i)=(g_i,g_j)=1$ whenever $i\neq j$, there is an adjoining transformation $T_{A,B}$ such that
$T_{A,B}(\mathcal{L})$ is admissible and $g_i$ is the adjoining factor for each $L_i$ under $T_{A,B}$.
\end{enumerate}
\end{lemma}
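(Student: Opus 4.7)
For part (a), I would exploit the fact that a relation is a polynomial identity in $m$: substituting $m \mapsto Am + B$ into $c_{j,i} L_j - c_{i,j} L_i = r_{i,j}$ and using $L_i(Am + B) = g_i T_{A,B}(L_i)$ rewrites the identity as $(g_j c_{j,i}) T_{A,B}(L_j) - (g_i c_{i,j}) T_{A,B}(L_i) = r_{i,j}$, which is of the claimed form; consistency is preserved since the rescaling $c_i \mapsto g_i c_i$ depends only on the vertex $i$. Part (b) is then immediate from (a): whichever original relation realizes the minimum distance, and whichever triangle relation realizes the minimum diameter, transports to a relation of the same value on $T_{A,B}(\mathcal{L})$, so the new minima can only decrease.

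For part (c), the construction I would try is $A = G := \prod_{i=1}^k g_i$, which is positive and a multiple of every $g_i$. Because the $g_i$ are pairwise coprime and each satisfies $(g_i, a_i) = 1$, the Chinese Remainder Theorem produces $B \in \Z$ solving $a_i B + b_i \equiv 0 \pmod{g_i}$ for every $i$, which immediately forces $g_i \mid d_i := (a_i A, a_i B + b_i)$. To show $d_i = g_i$ exactly, I would observe that $d_i \mid B \cdot a_i A - A(a_i B + b_i) = -Ab_i$ and $d_i \mid a_i A$, so $d_i \mid A(a_i, b_i) = A = G$ by reducedness of $L_i$. If a prime $p$ divided $d_i/g_i$, it would divide some $g_j$ with $j \neq i$; combining $p \mid a_i B + b_i$ with $a_j B + b_j \equiv 0 \pmod p$ then eliminates $B$ to yield $p \mid a_j b_i - a_i b_j$, contradicting $(g_j, a_j b_i - a_i b_j) = 1$ (which is the hypothesis with the roles of $i$ and $j$ swapped).

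Admissibility of $T_{A,B}(\mathcal{L})$ I would verify one prime $p$ at a time. If $p \nmid G$, then $A = G$ is invertible modulo $p$ and $(p, g_i) = 1$ for every $i$, so $p \mid T_{A,B}(L_i)(m)$ is equivalent to $p \mid L_i(Am + B)$; the map $m \mapsto Am + B$ permutes $\Z/p\Z$, so admissibility of $\mathcal{L}$ at $p$ transfers directly. If $p \mid g_{i_0}$ for the unique such $i_0$ (uniqueness follows from pairwise coprimality), then for $i \neq i_0$ the transformed form reduces modulo $p$ to the constant $a_i B + b_i \equiv (a_{i_0} b_i - a_i b_{i_0})/a_{i_0} \pmod p$, which is nonzero by the same coprimality hypothesis; and for $i = i_0$, after dividing out $g_{i_0}$ the form is linear in $m$ with leading coefficient $a_{i_0}(G/g_{i_0})$ coprime to $p$, so it takes nonzero values on at least $p-1 \geq 1$ residue classes.

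The main obstacle will be the coprimality bookkeeping in part (c). Both the proof that the adjoining factor is exactly $g_i$ and the admissibility check at primes $p \mid G$ rely on the same elimination step between the two congruences $a_i B + b_i \equiv 0$ and $a_j B + b_j \equiv 0 \pmod p$, which produces $p \mid a_j b_i - a_i b_j$ and then requires the hypothesis $(g_i, a_i b_j - a_j b_i) = 1$ to be applied with carefully chosen indices. The remaining hypotheses $(g_i, a_i) = 1$ and $(g_i, g_j) = 1$ enter only through the CRT and through the disjointness of the cases $p \mid g_{i_0}$.
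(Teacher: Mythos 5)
Your proof is correct. Parts (a) and (b) are handled essentially as in the paper: (a) is the same one-line substitution $m\mapsto Am+B$ followed by division by the adjoining factors, and your observation for (b) that every relation (and every triangle relation) transports with the same value, so the minima can only decrease, is equivalent to the paper's appeal to (a) together with the distance/diameter formulas. The genuine difference is in part (c), which the paper does not prove in-house at all: it cites the earlier work \cite{GGPPSY} and records the construction $A=(g_1g_2\cdots g_k)^2$ with $B$ solving $L_i(B)\equiv g_i \pmod{g_i^2}$. You instead give a self-contained argument with the lighter choice $A=\prod_i g_i$ and $B$ solving $L_i(B)\equiv 0\pmod{g_i}$. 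The trade-off is visible in how one pins down the adjoining factor exactly: the paper's congruence modulo $g_i^2$ fixes the $g_i$-part of $L_i(B)$ directly, while your weaker congruence is compensated by the observation that $d_i=(a_iA,a_iB+b_i)$ divides $A(a_i,b_i)=A$, so $d_i/g_i$ divides $\prod_{j\neq i}g_j$, and any offending prime is killed by eliminating $B$ between $L_i(B)$ and $L_j(B)$ via the hypothesis $(g_j,a_jb_i-a_ib_j)=1$ --- the same determinant condition the cited construction also needs. Your admissibility check (bijection $m\mapsto Am+B$ at primes $p\nmid A$; at $p\mid g_{i_0}$, one linear form with unit leading coefficient and $k-1$ nonzero constants, giving $\nu(p)\le 1<p$) is complete. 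One cosmetic slip: for $i\neq i_0$ the transformed form reduces mod $p$ to the constant $(a_iB+b_i)/g_i$ rather than $a_iB+b_i$, but since $p\nmid g_i$ this does not affect the nonvanishing argument.
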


\begin{proof}
Part (a) follows from definitions since $c_{j,i}L_j(m)-c_{i,j}L_i(m)=r_{i,j}$ implies $c_{j,i}g_j(L_j(Am+B)/g_j)-c_{i,j}g_i(L_i(Am+B)/g_i)=r_{i,j}$. %The second part follows from the first part since $f(g_ic_{i,j})$.
Part (b) follows from part (a) combined with \cref{lem:distance}. 
The authors
and their collaborators proved part (c) in \cite{GGPPSY}. In fact, it was shown that we can take $A=(g_1g_2\cdots g_k)^2$ and $B$ to be a solution of the congruences $L_i(B)\equiv g_i \pmod{g_i^2}$.
\end{proof}

% \begin{example}
% Consider again the $K_3$ relation diagram in \cref{dtriple} for the admissible triple $L_1=3m+2$, $L_2=10m+7$, and $L_3=4m+3$. All relation values in the diagram are $1$, but
% the diagram is not $\Omega$-compatible. However, we can use \cref{adjointhm}
% with $g_1=1$, $g_2=3$, $g_3=5$ to get an adjoining transformation $T_{225,113}$ which produces an admissible triple with the $\Omega$-compatible diagram seen in \cref{dtripleadjoin}. This allows one to conclude as in \cite{Gold2} that
% \begin{equation*}
%     \liminf_{n\rightarrow \infty} (S_{n+1}^{(4)} - S_n^{(4)}) = 1
% \end{equation*}
% \begin{figure}
%     \centering$$\xymatrixcolsep{2pc}\xymatrix{ & & 750m+379  \ar@{|->}[rrdd]^<<<<<<{(6)}^{\mbox{$1$}}^>>>{(25)} && \\
% & & & & \\
% 675m+341 \ar@{|->}[rruu]^<<<{(10)}^{\mbox{$1$}}^>>>>>>{(9)} \ar@{|->}[rrrr]_<<<{(4)}_{\mbox{$1$}}_>>>{(15)} & &  & & 180m+91}$$
%     \caption{An $\Omega$-Compatible Relation Diagram of an Admissible Triple.}
%     \label{dtripleadjoin}
% \end{figure}
% \end{example}

\section{Proofs of Theorems 2 to 4}

In this section we use \cref{pattern,adjointhm} to establish proofs for our main results.

\begin{proof}[Proof of \cref{E3}] First, we assume $\mathrm{EH}(\mathcal{P}, \mathcal{E}_2)$. Start with the admissible $5$-tuple $\mathcal{L}$ from \cref{diagram}: $2m + 1$, $3m + 2$, $6m + 5$, $6m + 7$, $3m + 4$. Recall that the diagram on $\mathcal{L}$ in \cref{fig:P3} is consistent with relation values ranging from $1$ to $5$ and relation coefficients $c_1=3$, $c_2 = 2$, $c_3=1$, $c_4=1$, $c_5=2$. We can apply a transformation to get
a new admissible tuple $T_{35,0}(\mathcal{L})$: $L_1'=70m+ 1$, $L_2'=105m+ 2$, $L_3'=42m+ 1$, $L_4'=30m+ 1$, $L_5'=105m+ 4$. The associated consistent diagram is both $\omega$- and $\Omega$-compatible since all relation coefficients are now prime: $c_1'=3$, $c_2'=2$, $c_3'=5$, $c_4'=7$, $c_5'=2$. See \cref{fig:E3}.
\begin{figure}
  \begin{tikzpicture}
    \node[
      regular polygon,
      regular polygon sides=5,
      minimum width=80mm,
    ] (L) {}
      (L.corner 1) node (L1) {$70m+ 1$}
      (L.corner 2) node (L2) {$105m + 2$}
      (L.corner 3) node (L3) {$42m+ 1$}
      (L.corner 4) node (L4) {$30m+1$}
      (L.corner 5) node (L5) {$105m+ 4$}
    ;
    \draw [thick, |->, shorten >= 6pt, shorten <= 6pt] (L1) -- (L2) node[midway,above left] {1} node[pos=0.1,above left] {\scriptsize (3)} node[pos=0.9,above left] {\scriptsize (2)};
    \draw [thick, |->, shorten >= 6pt, shorten <= 6pt] (L1) -- (L3) node[midway,above left] {2} node[pos=0.13,above left] {\scriptsize (3)} node[pos=0.9,above left] {\scriptsize (5)};
    \draw [thick, |->, shorten >= 6pt, shorten <= 6pt] (L1) -- (L4) node[midway,above right] {4} node[pos=0.13,above right] {\scriptsize (3)} node[pos=0.9,above right] {\scriptsize (7)};
    \draw [thick, |->, shorten >= 6pt, shorten <= 6pt] (L1) -- (L5) node[midway,above right] {5} node[pos=0.1,above right] {\scriptsize (3)} node[pos=0.9,above right] {\scriptsize (2)};
    \draw [thick, |->, shorten >= 6pt, shorten <= 6pt] (L2) -- (L3) node[midway,below left] {1} node[pos=0.05,below left] {\scriptsize (2)} node[pos=0.85,below left] {\scriptsize (5)};
    \draw [thick, |->, shorten >= 6pt, shorten <= 6pt] (L2) -- (L4) node[midway,below left] {3} node[pos=0.11,below left] {\scriptsize (2)} node[pos=0.9,below left] {\scriptsize (7)};
    \draw [thick, |->, shorten >= 6pt, shorten <= 6pt] (L2) -- (L5) node[midway,above] {4} node[pos=0.1,above] {\scriptsize (2)} node[pos=0.9,above] {\scriptsize (2)};
    \draw [thick, |->, shorten >= 4pt, shorten <= 4pt] (L3) -- (L4) node[midway,below] {2} node[pos=0.11,below] {\scriptsize (5)} node[pos=0.9,below] {\scriptsize (7)};
    \draw [thick, |->, shorten >= 6pt, shorten <= 6pt] (L3) -- (L5) node[midway,below right] {3} node[pos=0.11,below right] {\scriptsize (5)} node[pos=0.9,below right] {\scriptsize (2)};
    \draw [thick, |->, shorten >= 6pt, shorten <= 6pt] (L4) -- (L5) node[midway,below right] {1} node[pos=0.13,below right] {\scriptsize (7)} node[pos=0.95,below right] {\scriptsize (2)};
  \end{tikzpicture}
    \caption{An $\omega$- and $\Omega$-Compatible Relation Diagram}
    \label{fig:E3}
\end{figure}
Thus \cref{pattern} implies that there are integers $a,b$ with $1\leq a < b\leq 5$ such that $\omega(x)=\omega(x+a)=\omega(x+b)=\Omega(x)=\Omega(x+a) = \Omega(x+b)=3$ for infinitely many $x$. This proves
\[\liminf_{n\rightarrow \infty} (q^{(3)}_{n+2} - q^{(3)}_n)\leq 5. \]
%In fact, the diagram tells us that $\{a,b\}$ must be one of the sets $\{1,2\}$, $\{1,3\}$, $\{1, 4\}$, $\{1,5\}$, $\{2,3\}$, $\{2,4\}$, $\{2,5\}$, $\{3,4\}$, $\{4,5\}$ (so only missing $\{3,5\}$)

To get the unconditional result about small gaps between three $E_3$-numbers, we begin with a non-admissible 10-tuple $\mathcal{L}$ of monic forms: $m+4$, $m+5$, $m+7$, $m+8$, $m+9$, $m+11$, $m+13$, $m+16$, $m+17$, $m+19$. The trivial diagram on $\mathcal{L}$ with all relation coefficients equal to $1$ is consistent with relation values ranging from $1$ to $15$. We apply a transformation making use of a primorial $19\# = 2\cdot 3 \cdot 5 \cdot 7 \cdot 11 \cdot 13 \cdot 17 \cdot 19 = 9699690$ to get an admissible tuple $\mathcal{L}'=T_{19\#,0}(\mathcal{L})$:
\begin{align*}
    L_1'=4849845m+ 2, L_2'=1939938m+ 1, \\
    L_3'=1385670m+ 1, L_4'=4849845m+ 4, \\
    L_5'=3233230m+ 3, L_6'=881790m+ 1, \\
    L_7'=746130m+ 1, L_8'=4849845m+ 8, \\
    L_9'=570570m+ 1, L_{10}'=510510m+1.
\end{align*}
The associated consistent diagram is both $\omega$- and $\Omega$-compatible since all relation coefficients are prime: $c_1'=2$, $c_2'=5$, $c_3'=7$, $c_4'=2$, $c_5'=3$, $c_6'=11$, $c_7'=13$, $c_8'=2$, $c_9'=17$, $c_{10}'=19$. Thus \cref{pattern} implies that there are integers $a,b$ with $1\leq a < b\leq 15$ such that $\omega(x)=\omega(x+a)=\omega(x+b)=\Omega(x)=\Omega(x+a) = \Omega(x+b)=3$ for infinitely many $x$. This proves
\[\liminf_{n\rightarrow \infty} (q^{(3)}_{n+2} - q^{(3)}_n)\leq 15. \]
% To make the above tuple, I wrote down a list of ten positive integers a_1, ..., a_{10} in increasing order with the following properties: (1) no a_i is divisible by more than two primes from the list 2,3,5,7, (2) if a_i has the smallest index i such that a_i is divisible by a prime p from the list 2,3,5,7, then any larger index j > i where a_j is also divisible by p has the property that p^2|a_j if and only if p^2|a_i. Now we admissiblize the monic tuple m + a_1, ..., m + a_{10} by substituting m with 2*3*5*7m and then reducing forms. Properties (1) and (2) together give you admissibility of the new tuple and also ensure that relation coefficients (which were all 1 for the monic forms) get either a 1 or a prime adjoined to them.
\end{proof}

\begin{proof}[Proof of \cref{S3,Div}]
%First, we assume $\mathrm{EH}(\mathcal{P}, \mathcal{E}_2)$.
We begin with the non-admissible monic $k$-tuple $\mathcal{L}$: $L_i = m+i$ for $i=1, \ldots, k$. The trivial diagram on $\mathcal{L}$ with all relation coefficients equal to $1$ is consistent with relation values ranging from $1$ to $k-1$.  We apply a transformation making use of the least common multiple $A(k)=[1,2,\ldots,k]$ to get an admissible tuple $\mathcal{L}'=T_{A(k),0}(\mathcal{L})$ which satisfies a consistent diagram with edges
\begin{center}
\begin{tikzpicture}
  \coordinate [label={[xshift=0cm, yshift=-.3cm]$L_i'$}] (L1) at (-1.5cm,0cm);
  \coordinate [label={[xshift=0cm, yshift=-.35cm]$L_j'$}] (L2) at (1.5cm,0cm);
    ;
    \draw [thick, |->, shorten >= 9pt, shorten <= 9pt] (L1) -- (L2) node[midway,above] {$j-i$} node[pos=0.17,above] {\scriptsize $(i)$} node[pos=0.83,above] {\scriptsize $(j)$};
  \end{tikzpicture}
\end{center}
whenever $i<j$. We will adjoin factors $g_i$ by \cref{adjointhm} in various ways to maintain admissibility and get relation coefficients $c_i'=ig_i$ in our consistent diagram.
Note that the linear coefficients $a_i'=A(k)/i$ and the determinants $a_i'b_j'-a_j'b_i' = A(k)(j-i)/(ij)$ are only divisible by primes less than or equal to $k$, so we take our $g_i$ to consist of primes greater than $k$.
First we assume $\mathrm{EH}(\mathcal{P}, \mathcal{E}_2)$, so we may take $k=5$ here by \cref{thm:nu=2}.
Now adjoin $g_1=7$ and $g_i=1$ otherwise to get an admissible $5$-tuple $T_{7^2,5}(\mathcal{L}')$: $420m+ 43$, $1470m+ 151$, $980m+ 101$, $735m+ 76$, $588m+ 61$.
The associated consistent diagram is $\omega$-compatible with relation coefficients satisfying $\omega(ig_i)=1$ for all $i$, so \cref{pattern}
implies there are integers $a,b$ with $1\leq a<b \leq 4$ such that $\omega(x)=\omega(x+a)=\omega(x+b)=3$ for infinitely many $x$. This proves
\[\liminf_{n\rightarrow \infty} (s^{(3)}_{n+2} - s^{(3)}_n)\leq 4. \]
Next, we adjoin $g_1=7\cdot 11$, $g_2=13$, $g_3=17$, $g_4=1$, and $g_5=19$ to get an admissible $5$-tuple $T_{A,B}(\mathcal{L}')$ with $A=323323^2$ and $B=97650202718$:
\begin{align*}
81457996620m+ 76091067053,\\
241240989990m+ 225346621657, \\
122985602740m+ 114882591433, \\
1568066434935m+ 1464753040771, \\
66023849892m+ 61673812243.
\end{align*}
The associated consistent diagram is $\Omega$-compatible with relation coefficients satisfying $\Omega(ig_i)=2$ for all $i$, so \cref{pattern}
implies there are integers $a,b$ with $1\leq a<b \leq 4$ such that $\Omega(x)=\Omega(x+a)=\Omega(x+b)=4$ for infinitely many $x$. This proves
\[\liminf_{n\rightarrow \infty} (S^{(4)}_{n+2} - S^{(4)}_n)\leq 4. \]
Recall that $h(n)=$ least positive integer with the same exponent pattern as $n$.
We can get an $h$-compatible diagram (i.e., a common exponent pattern for all relation coefficients) by taking $g_1=7^2\cdot 11$, $g_2=13^2$, $g_3=17^2$, $g_4=19$, $g_5=23^2$ to get an admissible 5-tuple $T_{A,B}(\mathcal{L}')$ with $A=264595580249^2$ and $B=46136207543205182050716$:
\begin{align*}
7793412365933766111540m+ 5135755941729704866499, \\
12427956406030473177870m +8189859327196186162849, \\
4845039521612802692180m +3192817131017659657489, \\
55271700858398683343685m+ 36423321744635670040039, \\
1588147170222419634828m + 1046568035006544772417.
\end{align*}
The associated consistent diagram has relation coefficients satisfying $h(ig_i)=2^2\cdot 3^1$ for all $i$, so \cref{pattern}
implies there are integers $a,b$ with $1\leq a<b \leq 4$ such that $h(x)=h(x+a)=h(x+b)=2^2\cdot 3^1 \cdot 5^1 \cdot 7^1$ for infinitely many $x$, so here $x$, $x+a$, $x+b$ all have exponent pattern $\{2,1,1,1\}$ with $d(x)=d(x+a)=d(x+b)=24$.

To get unconditional results, we may take $k=10$ by \cref{thm:nu=2}. Now adjoin $g_1=11\cdot 13$, $g_2=17$, $g_3=19$, $g_4=23$, $g_5=29$, $g_6=1$, $g_7=31$, $g_8=37$, $g_9=41$, $g_{10}=1$ to get an admissible $10$-tuple whose associated consistent diagram is $\omega$-compatible with relation coefficients satisfying $\omega(ig_i)=2$ for all $i$, so \cref{pattern}
implies there are integers $a,b$ with $1\leq a<b \leq 9$ such that $\omega(x)=\omega(x+a)=\omega(x+b)=4$ for infinitely many $x$. This proves
\[\liminf_{n\rightarrow \infty} (s^{(4)}_{n+2} - s^{(4)}_n)\leq 9. \]
Next, we adjoin $g_1=11^3$, $g_2=13^2$, $g_3=17^2$, $g_4=19$, $g_5=23^2$, $g_6=29$, $g_7 = 31^2$, $g_8=1$, $g_9= 37$, $g_{10}=41$ to get an admissible $10$-tuple whose associated consistent diagram is $\Omega$-compatible with relation coefficients satisfying $\Omega(ig_i)=3$ for all $i$, so \cref{pattern}
implies there are integers $a,b$ with $1\leq a<b \leq 9$ such that $\Omega(x)=\Omega(x+a)=\Omega(x+b)=5$ for infinitely many $x$. This proves
\[\liminf_{n\rightarrow \infty} (S^{(5)}_{n+2} - S^{(5)}_n)\leq 9. \]
Lastly, we adjoin $g_1=11^3\cdot 13^2 \cdot 17\cdot 19$, $g_2=23^3\cdot 29^2\cdot 31$, $g_3=37^3\cdot 41^2\cdot 43$, $g_4=47^3\cdot 53\cdot 59$, $g_5=61^3\cdot 67^2 \cdot 71$, $g_6=73^3\cdot 79^2$, $g_7=83^3\cdot 89^2 \cdot 97$, $g_8=101^2\cdot 103 \cdot 107$, $g_9=109^3\cdot 113\cdot 127$, $g_{10}=131^3\cdot 137^2$ to get an admissible $10$-tuple whose consistent diagram is $h$-compatible with relation coefficients satisfying $h(ig_i)=2^3\cdot 3^2 \cdot 5^1\cdot 7^1$ for all $i$, so \cref{pattern}
implies there are integers $a,b$ with $1\leq a<b \leq 9$ such that $h(x)=h(x+a)=h(x+b)=2^3\cdot 3^2 \cdot 5^1 \cdot 7^1\cdot 11^1 \cdot 13^1$ for infinitely many $x$, so here $x$, $x+a$, $x+b$ all have exponent pattern $\{3,2,1,1,1,1\}$ with $d(x)=d(x+a)=d(x+b)=192$.
\end{proof}

\section{The GGPY Sieve and Proof of Theorem 6}\label{ggpy}

In this last section, we use the GGPY sieve of \cite{Gold1} to
prove \cref{thm:nu=2}. This will complete the proofs of \cref{E2,E3,S3,Div}. We seek to establish when there are three sifted $E_2$-numbers simultaneously infinitely often in an admissible $k$-tuple. 
The conjectural versus conditional results are distinguished by a common level of distribution for primes and $E_2$-numbers.

\begin{definition}
Define an error $E(x;a,b) = \pi(x;a,b) - \mathrm{li}(x)/\varphi(a)$ for the asymptotic in \cref{Dir}.
We say the primes have a \emph{level of distribution} $\vartheta$ if for all $A>0$ there is a constant $C$ (depending on $A$) such that
\begin{align*}
    \sum_{a\leq x^{\vartheta}/(\log x)^C} \max_{(a,b)=1} \left|E(x;a,b)\right| \ll_A \frac{x}{(\log x)^A}
\end{align*}
We can define a level of distribution for $E_2$-numbers in a completely analogous way.
\end{definition}

\begin{theorem}[Bombieri-Vinogradov, \cite{Bomb}, \cite{Vino}]
The primes have a level of distribution $1/2$.
\end{theorem}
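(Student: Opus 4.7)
The plan is to reduce the error $E(x;a,b)$ first to a weighted prime sum over arithmetic progressions and then to a collection of character sums, which are finally bounded using the multiplicative large sieve inequality. As a preliminary step I would replace $\pi(x;a,b)$ by $\psi(x;a,b) = \sum_{n \leq x,\ n \equiv b \pmod{a}} \Lambda(n)$; partial summation converts bounds between the two with only a power of $\log x$ lost, which can be absorbed into the factor $(\log x)^C$ already allowed in the statement. Using orthogonality of Dirichlet characters modulo $a$, and after the standard reduction to primitive characters (with the principal character contribution handled by the prime number theorem with classical error term), it suffices to establish
$$\sum_{q \leq Q} \sum_{\chi \bmod q}^{*} \max_{y \leq x} |\psi(y,\chi)| \ll_A \frac{x}{(\log x)^A}$$
with $Q = x^{1/2}/(\log x)^C$, where $\psi(y,\chi) = \sum_{n \leq y} \Lambda(n) \chi(n)$ and the asterisk indicates primitive characters.

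The main engine would then be a combinatorial identity decomposing $\Lambda$ on $[1,x]$ into manageable convolutions, for example Vaughan's identity or Heath-Brown's identity. This produces Type I sums of the form $\sum_m a_m \sum_{n \leq y/m} \chi(n)$, which I would handle via the Polya-Vinogradov inequality together with the trivial bound, and Type II (bilinear) sums $\sum_{m \asymp M,\, n \asymp N} a_m b_n \chi(mn)$ with the ranges $M, N$ well-separated from the endpoints. After Cauchy-Schwarz in one variable, each Type II piece is controlled by the multiplicative large sieve inequality
$$\sum_{q \leq Q} \frac{q}{\varphi(q)} \sum_{\chi \bmod q}^{*} \left| \sum_{n \leq N} c_n \chi(n) \right|^2 \ll (N + Q^2) \sum_{n \leq N} |c_n|^2.$$
It is precisely here that the exponent $1/2$ enters the picture: to balance the $N + Q^2$ factor against the outer length $x$, one is forced to take $Q$ no larger than $x^{1/2}$ up to powers of $\log x$.

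The principal obstacle I anticipate is the uniform treatment of the maximum over $y \leq x$ in the target bound. The cleanest way to handle this is Gallagher's smoothed version of the large sieve, or alternatively a Perron-style contour truncation combined with a dyadic decomposition of both $y$ and the ranges $M,N$; in either case one must be careful to track every factor of $\log x$ so that the losses still fit inside the saving $(\log x)^A$ demanded by the theorem. Once that bookkeeping is in place, assembling the Type I and Type II bounds across all pieces of the Vaughan decomposition and summing dyadically yields the desired estimate, establishing level of distribution $1/2$ for the primes.
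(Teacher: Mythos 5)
The paper itself offers no proof of this statement---it is the classical Bombieri--Vinogradov theorem, quoted with citations to Bombieri and Vinogradov---so your proposal is compared against the standard literature proof (Vaughan's identity plus the multiplicative large sieve), which is indeed the route you sketch. However, two points in your outline are genuine gaps rather than omitted routine detail. First, the intermediate estimate you claim ``suffices,'' namely $\sum_{q\le Q}\sum^{*}_{\chi \bmod q}\max_{y\le x}|\psi(y,\chi)|\ll_A x/(\log x)^A$ with $Q=x^{1/2}/(\log x)^C$, is false as stated: there are roughly $Q^2\approx x/(\log x)^{2C}$ primitive characters in that range and $\max_{y\le x}|\psi(y,\chi)|$ is generically of size about $x^{1/2}$ (even the large sieve bound for this unweighted sum is of order $x^{3/2}$), so the left side is far larger than $x$. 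The correct reduction of $E(x;a,b)$ to primitive characters carries a weight $1/\varphi(q)$, since a primitive character of conductor $q_1$ is counted with $\sum_{q\le Q,\ q_1\mid q}1/\varphi(q)\ll (\log x)/\varphi(q_1)$, and this weight is exactly what makes the dyadic bookkeeping close.

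Second, and more importantly, the large sieve gives no saving for small conductors: with the $1/\varphi(q)$ weight restored, a dyadic block $q\sim R$ contributes roughly $(x/R + x^{5/6} + x^{1/2}R)(\log x)^{O(1)}$ after Vaughan's identity and Cauchy--Schwarz, which beats $x/(\log x)^A$ only once $R$ exceeds a suitable power of $\log x$. The range of conductors $q\le(\log x)^{B}$ must therefore be handled by the Siegel--Walfisz theorem, i.e. $\psi(y,\chi)\ll y\exp(-c\sqrt{\log y})$ uniformly for moduli up to a fixed power of $\log y$; this is an indispensable input and the sole source of the ineffective constant in Bombieri--Vinogradov. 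Your plan only mentions disposing of the principal character via the prime number theorem, which does not cover nonprincipal characters of bounded or polylogarithmic conductor, and no amount of Type I/Type II manipulation, Polya--Vinogradov, or Gallagher smoothing recovers that range. With the weight corrected and Siegel--Walfisz inserted for small moduli, the remainder of your outline (Vaughan decomposition, Polya--Vinogradov for Type I sums, large sieve after Cauchy--Schwarz for the bilinear pieces, and a standard treatment of the maximum over $y$) is the classical argument, and the exponent $1/2$ does emerge exactly as you say from balancing $Q^2$ against the available bilinear lengths.
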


\begin{theorem}[Motohashi, \cite{Moto}]
The $E_2$-numbers have a level of distribution $1/2$.
\end{theorem}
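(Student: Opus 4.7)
The plan is to adapt the classical Bombieri--Vinogradov argument, replacing the von Mangoldt weight by a weight that detects $E_2$-numbers. A convenient choice is Selberg's second weight
\begin{equation*}
\Lambda_2(n) \mathrel{\mathop:}= \sum_{d\mid n}\mu(d)\log^2(n/d),
\end{equation*}
which is supported on integers with at most two distinct prime factors, satisfies $\Lambda_2(p_1 p_2) = 2\log p_1\log p_2$ for distinct primes $p_1,p_2$, and contributes only $O(\sqrt{x}\log^2 x)$ from prime powers. Partial summation thus reduces the theorem to the analogue of Bombieri--Vinogradov for $\Lambda_2$: for $Q = x^{1/2}/(\log x)^C$,
\begin{equation*}
\sum_{q\leq Q}\max_{(a,q)=1}\biggl|\sum_{\substack{n\leq x\\ n\equiv a\,(q)}}\Lambda_2(n)\;-\;\frac{1}{\varphi(q)}\sum_{\substack{n\leq x\\(n,q)=1}}\Lambda_2(n)\biggr| \;\ll_A\; \frac{x\log x}{(\log x)^A}.
\end{equation*}

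Opening $\Lambda_2 = \mu * \log^2$ writes the inner expression as a convolution sum in variables $d,m$ with $dm\leq x$. The next step is to apply a Heath--Brown (or Vaughan) combinatorial identity to decompose this into $O(1)$ many pieces, each of Type I (one variable short or smooth) or Type II (a genuine bilinear form with both variables of comparable polynomial size). Type I pieces reduce cleanly to Bombieri--Vinogradov for the primes, applied to the inner sum after expressing $\log = 1 * \Lambda$ and performing partial summation; reaching level $1/2$ is comfortable here.

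The Type II pieces require a genuine bilinear estimate of the shape
\begin{equation*}
\sum_{q\leq Q}\max_{(a,q)=1}\biggl|\sum_{\substack{m\sim M\\n\sim N}}\alpha_m\beta_n\Bigl(\mathbf 1_{mn\equiv a\,(q)} - \tfrac{1}{\varphi(q)}\mathbf 1_{(mn,q)=1}\Bigr)\biggr|\;\ll_A\; \frac{x}{(\log x)^A}
\end{equation*}
with $MN\asymp x$ and at least one of the weight sequences supported on primes. The route here is Linnik's dispersion method: Cauchy--Schwarz removes the outer maximum over $a$, expanding the square in $(m,n)$ and $(m',n')$ and opening $1/\varphi(q)$ via Dirichlet characters reduces everything to the multiplicative large sieve
\begin{equation*}
\sum_{q\leq Q}\frac{q}{\varphi(q)}\sideset{}{^*}\sum_{\chi\!\!\pmod q}\biggl|\sum_{m\sim M}\alpha_m\chi(m)\biggr|^2 \;\ll\; (Q^2+M)\,\|\alpha\|_2^2,
\end{equation*}
combined with Siegel--Walfisz to neutralise the contribution of exceptional characters.

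The crux, and the main obstacle, is the Type II estimate at the balancing point $q \asymp x^{1/2} \asymp M \asymp N$: there the large sieve yields $Q^2 + M \asymp x$ on the nose, so every logarithmic saving must come from the near-orthogonality of characters and the quality of the prime-count error supplied by BV/Siegel--Walfisz. This is precisely the delicate case in Linnik's original dispersion treatment of $E_2$-numbers, and it is what pins the level of distribution to the Bombieri--Vinogradov threshold $1/2$.
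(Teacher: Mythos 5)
This statement is not proved in the paper at all: it is quoted from Motohashi's 1976 paper, whose argument is an \emph{induction principle} showing that if two sequences satisfy a Bombieri--Vinogradov type estimate at level $1/2$ together with a Siegel--Walfisz condition, then so does their Dirichlet convolution; applying this with both factors essentially the primes gives the result for $E_2$-numbers without redoing a dispersion argument from scratch. Your proposal instead attempts a direct proof (Vaughan/Heath--Brown decomposition plus dispersion and the large sieve), which is a legitimate strategy in principle, but as written it has two genuine gaps.

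First, the opening reduction is flawed: $\Lambda_2(p_1p_2)=2\log p_1\log p_2$ is not a function of $n=p_1p_2$ alone (it ranges from about $\log n$ to about $\tfrac14(\log n)^2$ depending on the factorization), so partial summation in $n$ does not convert the $\Lambda_2$-weighted congruence sums into the unweighted count of $E_2$-numbers that the level-of-distribution statement requires; one must instead keep track of the prime variables separately (e.g.\ dyadically in $p_1$), which is where the level restriction $q\le (x/p_1)^{1/2}$ becomes the real difficulty. Relatedly, the support of $\Lambda_2$ is all numbers $p^aq^b$, not ``$E_2$-numbers plus prime powers'': the terms with $a\ge 2$ contribute on the order of $x\log\log x$, not $O(\sqrt{x}\log^2 x)$, so discarding them also needs an argument. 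Second, and more seriously, the decisive step is never carried out: your final paragraph correctly identifies the Type II bilinear estimate at the balancing point $M\asymp N\asymp Q\asymp x^{1/2}$ as ``the crux and the main obstacle,'' but then stops, asserting only that the saving ``must come from'' orthogonality of characters and Siegel--Walfisz. That estimate, with the convolution structure forced by $E_2$-numbers, is precisely the content of Motohashi's theorem (and of Linnik's dispersion treatment you allude to); deferring it means the proposal assumes what is to be proved. To make this route complete you would need to either execute the dispersion/large-sieve computation in that range in detail or invoke Motohashi's induction principle, which is what the cited source does.
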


Friedlander and Granville \cite{Fried} showed that 
the primes do not have a level of distribution $1$. However,
the following conjecture, which first appeared for primes in \cite{Elli}, may still hold.

\begin{conjecture}[Elliot-Halberstam, $\mathrm{EH}(\mathcal{P}, \mathcal{E}_2)$] Primes and $E_2$-numbers have a common level of distribution $\vartheta$ for every $\vartheta < 1$.
\end{conjecture}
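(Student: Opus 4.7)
To be candid up front: the Elliott-Halberstam conjecture for primes and $E_2$-numbers is one of the central open problems of analytic number theory, and I cannot propose a serious proof. What follows is only a sketch of the strategy any attempt would have to follow, together with an account of where the roadblock lies. In particular, the paper itself treats $\mathrm{EH}(\mathcal{P},\mathcal{E}_2)$ strictly as a hypothesis used to sharpen the constants in \cref{E2,E3,S3,Div}, not as a result to be established; a genuine proof would be a famous theorem in its own right, not a lemma inside this paper.

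The natural plan is to expand $\pi(x;a,b) - \mathrm{li}(x)/\varphi(a)$ over Dirichlet characters modulo $a$, reducing the problem to a mean-value estimate of the form $\sum_{a\leq Q}\max_{(b,a)=1}|\psi(x,\chi) - \delta_\chi x|$, and then combinatorially decompose $\Lambda$ (via Vaughan's or Heath-Brown's identity) into type-I and type-II bilinear sums. For $Q \leq x^{1/2}(\log x)^{-C}$ the large sieve inequality combined with Siegel-Walfisz closes the argument; this is exactly Bombieri-Vinogradov. The analogous statement for the indicator of $E_2$-numbers is Motohashi's theorem and proceeds by applying the same machinery to the convolution $1_{\mathcal{P}} * 1_{\mathcal{P}}$. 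To push $Q$ beyond $x^{1/2}$ one would invoke Linnik's dispersion method, opening the error into incomplete Kloosterman sums and bounding them via Weil, then, following Bombieri-Friedlander-Iwaniec, extracting an extra $q^{\varepsilon}$ saving from the spectral theory of automorphic forms through Kuznetsov's trace formula. One would then hope to iterate, exploiting the combined convolution structure of $1_{\mathcal{P}}$ and $1_{\mathcal{E}_2}$, to approach level $1-\varepsilon$ uniformly.

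The hard part, and the reason this has stood open since 1968, is that as $Q \to x$ the target error $\mathrm{li}(x)/\varphi(a)$ becomes comparable to the trivial bound $x/a$, so genuine cancellation demands equidistribution input far beyond what $\mathrm{GL}_1$ methods currently provide: even the Generalized Riemann Hypothesis for Dirichlet $L$-functions only yields level $1/2$, and the Friedlander-Granville disproof of level $1$ shows that any valid argument must be finely tuned in the exponent. Every known unconditional advance past $1/2$ (Bombieri-Friedlander-Iwaniec for fixed residues, Zhang and Polymath 8 for smooth or well-factorable moduli) is either restricted to a proper subclass of moduli or to a weaker averaged variant of the estimate, and none comes close to the uniform statement for all $\vartheta < 1$. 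Absent a fundamentally new input, say beyond-endoscopy-style cancellation on the spectral side or an unexpected algebraic saving in the Kloosterman sums past Weil, $\mathrm{EH}(\mathcal{P},\mathcal{E}_2)$ must remain a conjecture.
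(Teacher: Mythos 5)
You are right not to attempt a proof: the paper does not prove $\mathrm{EH}(\mathcal{P},\mathcal{E}_2)$ at all --- it is stated as an open conjecture and used purely as a hypothesis in the conditional halves of the theorems, with the unconditional results relying instead on the level-$1/2$ theorems of Bombieri--Vinogradov and Motohashi. Your sketch of the surrounding landscape (character/large-sieve reduction, Vaughan-type decompositions, dispersion and Bombieri--Friedlander--Iwaniec beyond $1/2$ only for restricted moduli or averaged forms, and the Friedlander--Granville obstruction to level $1$) is accurate and consistent with how the paper treats the conjecture, so there is nothing further to compare.
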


We define an indicator function for sifted $E_2$-numbers as follows. Take $\beta(n) = 1$ if $n = p_1p_2$ where each $p_i$ is a prime with $N^{\eta}< p_1 \leq N^{1/2} < p_2$ where $\eta \in (0, 1/4]$, and take $\beta(n)=0$ otherwise. Here $N$ is a real number which can be taken arbitrarily large.

Let $\mathcal{L}$ denote an admissible $k$-tuple
of linear forms $L_i(m) = a_im+b_i$, and define a counting
function
\begin{align*}
    \mathcal{S} = \sum_{N<n\leq 2N} \left(\sum_{j=1}^k \beta(L_j(n))-\nu\right)\left(\sum_{d|P_{\mathcal{L}}(n)} \lambda_d\right)^2
\end{align*}
where $P_{\mathcal{L}}(n) = L_1(n)L_2(n)\cdots L_k(n)$ and the $\lambda_d$ are chosen carefully so that the sum can be estimated with integrals of 
polynomials. Note that $n$ contributes a positive 
amount to $\mathcal{S}$ when at least $\nu + 1$ of $L_1(n)$, $L_2(n)$, $\ldots$, $L_k(n)$ are sifted 
$E_2$-numbers. Thus if $\mathcal{S}$ is positive
for sufficiently large $N$, we must have $\nu+1$ forms among our $k$-tuple which simultaneously assume sifted $E_2$-numbers as values infinitely often. There is a normalization $\mathcal{L}'=T_{A,B}(\mathcal{L})$ where $\mathcal{L}'$ is still admissible and all the adjoining factors are $1$ so that if the associated sum $\mathcal{S}'$ is positive for large $N$, then $\mathcal{S}$ will be positive for large $N$ as well. The normalization is made since $\mathcal{S}'$ is easier to estimate than $\mathcal{S}$. In particular, the authors in \cite{Gold1} showed that whenever a quantity $J$ is positive, we have
\begin{align*}
    \mathcal{S}'\sim \frac{\mathfrak{S}(\mathcal{L}')N (\log R)^k J}{(k-1)!}
\end{align*}
as $N\rightarrow \infty$ where $R\leq N^{1/2}$ tends to infinity. The quantity $J$ is given as a sum of integrals of polynomials $P(x)$ and $\tilde{P}(x)=\int_0^x P(t)\, dt$:
\begin{align*}
J&=\frac{k(k-1)}{B} \cdot (J_1+J_2+J_3) - \nu \cdot J_0, \\
J_0 &= \int_0^1 P(1-x)^2x^{k-1}\, dx, \\
J_1 &= \int_{B\eta}^1 \frac{B}{y(B-y)}\int_0^{1-y} \left(\tilde{P}(1-x)-\tilde{P}(1-x-y)\right)^2 x^{k-2} \, dx \,dy, \\
J_2 &= \int_{B\eta}^1 \frac{B}{y(B-y)}\int_{1-y}^1 \tilde{P}(1-x)^2 x^{k-2}\,dx \,dy, \\
J_3 &= \int_1^{B/2} \frac{B}{y(B-y)} \int_0^1 \tilde{P}(1-x)^2x^{k-2} \,dx \,dy.
\end{align*}
Here $B = 2/\vartheta$ where $\vartheta$ is a common level of distribution for primes and $E_2$-numbers. Thus to get small gaps between $\nu+1=3$ sifted $E_2$-numbers, we need to show that $J>0$ for some choices of $B$ (either $B=4$ unconditionally or $B=2+\epsilon$ assuming $\mathrm{EH}(\mathcal{P}, \mathcal{E}_2)$), $\eta\in (0,1/4]$, and some polynomial $P(x)$.
\begin{proof}[Proof of \cref{thm:nu=2}]
For $\nu=2$ and $B=4$, we can take $k=10$, $\eta = 1/340$, and $P(x) = \frac{3}{20}  + \frac{3}{5}x + 10x^2$. Then
{\footnotesize
 \begin{align*}
    J_0 &= \frac{18549}{800800} = 0.02316308\ldots,\\
    J_1 &= \frac{2113287710672781837420508478315754592524}{105076848611852709873077392578125} -\frac{1262566905669}{17875}\log\left(\frac{113}{85} \right)\\
    &= 0.00063269\ldots, \\
    J_2 &= -\frac{79584691575671328932238080587887183154}{3967936940587444988214111328125} + \frac{12980396496724}{184275} \log\left(\frac{113}{85} \right) \\
    &= 0.00074896\ldots,\\
    J_3 &= \frac{911}{1474200} \log(3) = 0.00067890\ldots, \\
    J &= \frac{8719967520249406350967107646046792519503}{
7061164226716502103470800781250000} -\frac{1953628194503}{450450}\log\left(\frac{113}{85}\right) \\*
&+\frac{911}{65520} \log(3) = 0.00003645\ldots > 0.
 \end{align*}
 }
Now assume $\mathrm{EH}(\mathcal{P}, \mathcal{E}_2)$. For $\nu=2$ and $B=201/100>2$, we can take $k=5$, $\eta = 1/340$, and $P(x) = \frac{3}{4}  + 6x + 10x^2$. Then
{\footnotesize
\begin{align*}
    J_0 &= \frac{7487}{5040} = 1.48551587\ldots,\\
    J_1 &= -\frac{1185867362212062499391309326339523040406768636033}{6426286514402549760000000000000000000000000000} \\*
    &+ \frac{75466092079924833449781}{280000000000000000000}\log\left(\frac{68139}{34340}\right) = 0.15294205\ldots, \\
    J_2 &= \frac{42290186710920567072992744924095611611325091277}{229510232657233920000000000000000000000000000} \\*
    &- \frac{18808406922710397486573}{70000000000000000000}\log\left(\frac{68139}{34340}\right) = 0.14486877\ldots,\\
    J_3 &= \frac{1747}{12096}\log\left(101/100\right) = 0.00143710\ldots, \\
    J &= -\frac{54641056436520615648365200967849481935314631}{9639429771603824640000000000000000000000000} + 
 \frac{218375}{151956} \log\left(\frac{101}{100}\right) \\*
 &+ 
 \frac{1156539249170365689}{140000000000000000}\log\left(\frac{68139}{34340}\right) = 0.00655959\ldots > 0.
 \end{align*}
 }
 \end{proof}

\newcommand{\etalchar}[1]{$^{#1}$}
\providecommand{\bysame}{\leavevmode\hbox to3em{\hrulefill}\thinspace}
\providecommand{\MR}{\relax\ifhmode\unskip\space\fi MR }
% \MRhref is called by the amsart/book/proc definition of \MR.
\providecommand{\MRhref}[2]{%
  \href{http://www.ams.org/mathscinet-getitem?mr=#1}{#2}
}
\providecommand{\href}[2]{#2}

\end{document}